\newtheorem{lemma}{Lemma}[section]
\newtheorem{theorem}{Theorem}[section]
\newtheorem{proposition}{Proposition}[section]
\theoremstyle{definition}
\newtheorem{definition}{Definition}[section]
\theoremstyle{remark}
\newtheorem{remark}{Remark}[section]
\numberwithin{equation}{section}
\newcommand{\p}{\partial}
\newcommand{\norm}[1]{\left\Vert#1\right\Vert}
\newcommand{\dd}{\mathrm{d}}
\newcommand{\di}{\mathrm{div}}
\newcommand{\grad}{\mathrm{grad}}
\newcommand \R{\mathbb{R}}
\newcommand \T{\mathbb{T}}
\newcommand \Z{\mathbb{Z}}
\newcommand{\rmnum}[1]{\mathrm{\romannumeral #1}}
\newcommand{\Rmnum}[1]{\mathrm{\expandafter\@slowromancap\romannumeral#1@}}
\begin{document}
\title[Subsonic flow passing a duct with friction]
{Subsonic flows passing a duct for three-dimensional steady compressible Euler system with friction}

\author{Hairong Yuan}
\address{Hairong Yuan:
Department of Mathematics, Center for Partial Differential Equations, and Shanghai Key Laboratory of Pure Mathematics and Mathematical Practice,
East China Normal University, Shanghai 200241, China}
\email{hryuan@math.ecnu.edu.cn}

\author{Qin Zhao}
\address{Qin Zhao (Corresponding author): School of Mathematical Sciences, Shanghai Jiao Tong University,
	Shanghai 200240, China}
\email{zhao@sjtu.edu.cn}

\keywords{ Stability, subsonic flow, Fanno flow, three-dimensional, Euler system, friction,
nonlocal elliptic problem, elliptic-hyperbolic mixed-composite  type, decomposition. }

\subjclass[2010]{35F30, 35M32, 35Q31, 76G25, 76N10.}

\begin{abstract}
For the three-dimensional steady non-isentropic compressible Euler system with friction,  we show existence of a class of symmetric subsonic, supersonic and transonic-shock solutions in a straight duct with constant square-section. Such flows are called Fanno flow in engineering. We formulate a boundary value problem for subsonic flows, and study their stability under multidimensional small perturbations of boundary conditions. Since the subsonic Euler system is of elliptic-hyperbolic composite-mixed type, this is achieved by using the framework established in [L. Liu; G. Xu; H. Yuan: Stability of spherically symmetric subsonic flows and
transonic shocks under multidimensional perturbations. Adv. Math. 291 (2016), 696--757], and establishing an iteration scheme, which involves solving a second order nonlocal elliptic equation.
\end{abstract}
\maketitle
%\tableofcontents
%%%----------------------------------------------------------------------------

\section{Introduction}\label{sec1}

We consider polytropic gases, namely perfect fluids with the constitutive relation $p=A(s)\rho^\gamma$, that move steadily in a three-dimensional duct.
Here $p, \rho$, and $s$ represent the pressure, density of mass, and entropy of the flow respectively, and $\gamma>1$ is the adiabatic exponent, while $A(s)=k_0\exp(s/c_\nu)$ for two positive constants $k_0$ and $c_\nu$. In the Descartesian coordinates $(x^0, x^1,x^2)$ of the Euclidean space $\R^3$, let $D=\{(x^0,x^1,x^2): x^0\in(0,L), (x^1,x^2)\in(0,\pi)\times(0,\pi)\}$ be a rectilinear duct with length $L$ and constant square cross-section. Suppose the gas flows in the duct bearing frictions due to non-smoothness of the lateral walls.
It is noted that in gas dynamics, such compressible frictional flow is called as {\it Fanno flow} \cite[Section 2.16]{Far2008}. It is adopted frequently in engineering, including transport of natural gas in long pipe lines, the design and analysis of nozzles, etc. We wish to study such flows rigorously from mathematical point of view and find the roles played by friction, by considering the three-dimensional steady non-isentropic compressible Euler system.

Let $u=(u^0, u^1, u^2)^\top$ be the velocity of the gases. Recall that the flow is {\it subsonic} if $|u|<c$, and {\it supersonic} if $|u|>c$, where $c=\sqrt{\gamma p/\rho}$ is the sonic speed. It turns out that friction has different effects for subsonic flow and supersonic flow, and it also supports transonic shocks in ducts. In this paper, we firstly show existence of a class of subsonic flow and supersonic flow, as well as transonic shocks that depend only on $x^0 $ in $D$, analyze the effects of friction, and then study general perturbed subsonic flows. These results  will be used to study effects of frictions for transonic shocks in another paper.

To avoid technical difficulties arose by the lateral walls, as in \cite{CY,xiechen}, by assuming the flows have some symmetric properties with respect to the walls $[0,L]\times\p[0,\pi]^2$, we may suppose the flows are periodic in $x^1, x^2$-directions with periods $2\pi$. It usually happens that the gas moves mainly along the $x^0$-direction, that means, $u^0>0$ and $u^1, u^2$ are small comparing to $u^0$. Hence without loss of generality, we may assume that the friction force is acting on the negative $x^0$-direction, and equals $\mu (u^0)^2$ per unit mass of the gas. Here $\mu$ is a positive constant that may depend on the Mach number $M=|u|/c$ of the flow. Following the practice in gas dynamics \cite[Section 2.16]{Far2008}, we assume it is a constant. Then the motion of such flow with friction is governed by the following Euler system with a damping term ({\it cf.} \cite{CF, Da, Sha1953}):
\begin{eqnarray}
\di (\rho u\otimes u)+\grad\, p-\rho \mathfrak{b}&=&0,\label{eq101}\\
\di (\rho u)&=&0,\label{eq102}\\
\di (\rho E u)-\rho \mathfrak{b}\cdot u&=&0,\label{eq103}
\end{eqnarray}
where `$\di$' and `$\grad$' are respectively the standard divergence and gradient operator, $E\triangleq\frac{1}{2}|u|^2+\frac{\gamma}{\gamma-1}\frac{p}{\rho}$  is the so-called  {\it Bernoulli constant},  and $\mathfrak{b}=(-\mu (u^0)^2,0,0)^\top$.  These equations are the conservation of momentum, mass and energy, respectively.

The main difficulty of studying subsonic flows using the stationary compressible Euler system comes from the fact that it is of elliptic-hyperbolic composite-mixed type \cite{CY,LXY2016}, and there is no general theory to treat such equations. So it is even challenging to formulate a well-posed boundary value problem for studying subsonic Euler flows ({\it cf.} \cite{Yu1}), which is both important for theoretical analysis and numerical computations. It is more difficult to study the three-dimensional Euler system, since one cannot introduce Lagrange-type transform as in two-dimensional case \cite{cdx,Yu1}. In \cite{LXY2016}, the authors proposed a general framework to decompose the three-dimensional steady Euler system and  linearize it at a special symmetric solution. So in this paper we will mainly utilize the ideas and results established in \cite{LXY2016}. Comparing to the geometric effects considered in \cite{LXY2016}, appearance of friction leads to variation of  Bernoulli constant $E$ along flow trajectories, hence we were led to a nonlocal elliptic equation involving an integral term even for purely subsonic flows.

We also remark that the works of Chen and Xie \cite{xiechen} considering three-dimensional subsonic isentropic flows without frictions, based on a totally different method from ours.  One may consult  \cite{xiechen,LXY2016,weng} and references listed there for some results on subsonic flows and transonic shocks in Euler flows.  There are also many significant works on Fanno flows by establishing global weak solutions using generalized Glimm scheme or Gudonov scheme to the one-dimensional unsteady compressible Euler system with frictions, see \cite{CHHQ2016,Ts2015}. Huang, Pan and Wang et. al. also studied  weak solutions of the one dimensional unsteady compressible Euler system with a damping term (that is $\mathfrak{b}=-\mu u^0$), see for example, \cite{HPW} and references therein.  These studies, from different points of view, definitely help us understand better the role of friction in gas dynamics. See also \cite{BDX} for a study on the effect of electric field on subsonic flows.

The rest of the paper is organized as follows. In Section \ref{sec2} we show existence of special subsonic, supersonic and transonic shock solutions by studying some ordinary differential equations. Then we formulate a boundary value problem (S) to study general perturbed subsonic flows, and state the main result of this paper, namely Theorem \ref{thm21}. In Section \ref{sec3} we reformulate problem (S) to a more tractable problem (S3), by using a decomposition of the system \eqref{eq101}-\eqref{eq103} established in \cite{LXY2016}. In Section \ref{sec4}, we study a crucial mixed boundary value problem of a second order nonlocal elliptic equation. In Section \ref{sec5}, by showing contraction of a nonlinear mapping, we prove Theorem \ref{thm21}.
We remark that at first glance, many computations and expressions of this paper are quite similar to that of \cite{LXY2016}, however, we need to be very careful to carry out all the detailed calculations, and there are some subtle differences, since its the single term $\mathfrak{b}$ that drastically changes the overall behaviour of solutions of the Euler system, and the major differences lie in these details.

\section{Special solutions and main result }\label{sec2}

Let $\Omega=\{(x^0, x^1,x^2): x^0\in(0, L),x'=(x^1,x^2)\in\T^2\}$ be the duct we consider henceforth, where $\T^2=\{(x^1,x^2): x^1,x^2\in[0,2\pi]\}$ is the flat $2$-torus. Then $\p\Omega$,  the boundary of $\Omega$, is given by $\Sigma_0\cup \Sigma_1$, with $\Sigma_0=\{0\}\times \T^2$ and $\Sigma_1=\{L\}\times \T^2$. For the velocity vector $u=(u^0, u^1,u^2)^\top$, for convenience, in this paper we call $u^0$ the {\it normal velocity} and $u'=(u^1,u^2)^\top$ the {\it tangential velocity}.

\subsection{Special solutions}
Suppose that the flow depends only on $x^0$,  the normal velocity $u^0$ is positive, and the tangential velocity $u'$ is identically zero in $\Omega$. Then the system \eqref{eq101}-\eqref{eq103} is reduced to the following ordinary differential equations:
\begin{equation}\label{ode-1}
\begin{cases}
\frac{\dd }{\dd x^0}(\rho u)=0,\\
\frac{\dd }{\dd x^0}(\rho u^2+p)=-\mu \rho u^2,\\
\frac{\dd }{\dd x^0}\Big(\rho (\frac{1}{2}u^2+\frac{\gamma p}{(\gamma-1)\rho})u\Big)=-\mu \rho u^3.
\end{cases}
\end{equation}
We could solve for continuous flow with Mach number $M\ne1$ that
\begin{eqnarray}
\frac{\dd u}{\dd x^0}&=&\frac{\mu u M^2}{1-M^2},\label{eq25}\\
\frac{\dd \rho}{\dd x^0}&=&\frac{\mu\rho M^2}{M^2-1},\label{eq26}\\
\frac{\dd p}{\dd x^0}&=&\frac{\mu\gamma p M^2}{M^2-1}.\label{eq27}
\end{eqnarray}
Here, for simplicity, instead of $u^0(x^0)$, we have written  $u=u(x^0)$ to be the normal velocity.
By the fact that $E=\frac12 u^2+\frac{\gamma p}{(\gamma-1)\rho}$ and $p=A(s)\rho^\gamma$, it follows that
\begin{eqnarray}
\frac{\dd E}{\dd x^0}&=&-\mu u^2=-\frac{2\mu(\gamma-1)M^2}{(\gamma-1)M^2+2}E,\label{eq28E}\\
\frac{\dd s}{\dd x^0}&=&0,\label{eq29S}\\
\frac{\dd\theta}{\dd x^0}&=& (\gamma-1)\frac{\mu\theta M^2}{M^2-1},
\end{eqnarray}
where $\theta=p/\rho R$ is the temperature of the gas, and $R$ is a universal positive constant.
It is important to observe that the equation satisfied by the Mach number is decoupled:
\begin{eqnarray}\label{eqmach}
\frac{\dd M}{\dd x^0}&=&\frac{\mu(\gamma+1)M^3}{2(1-M^2)}.
\end{eqnarray}

\subsubsection{Subsonic flows}\label{sec211}
Suppose that the flow is subsonic in the duct, namely $M<1$ for $x^0\in[0,L]$. We easily see that
\[
\frac{\dd u}{\dd x^0}>0,\quad
\frac{\dd \rho}{\dd x^0}<0,\quad
\frac{\dd p}{\dd x^0}<0,\quad
\frac{\dd M}{\dd x^0}>0,\quad \frac{\dd \theta}{\dd x^0}<0;
\]
that is, the density, pressure and temperature decrease while the velocity and Mach number increase along the flow direction for subsonic Fanno flow. Furthermore, if the flow is continuous, then by uniqueness of solutions of Cauchy problem of ordinary differential equations,    $M(0)<1$ implies the flow is always subsonic in the duct.

Let the Mach number of the flow at the entry $\{x^0=0\}$ and the exit $\{x^0=L\}$ be $M_0$ and $M_1$, respectively. Suppose that $0<M_0<M_1<1$.  Then
integrating \eqref{eqmach} for $x^0$ from $0$ to $l$ yields
\[
\int_{M_0}^{M_1}\frac{1-M^2}{M^3}\dd M=\int_{0}^{l}\frac{\mu(\gamma+1)}{2}\dd x^0,
\]
and we obtain that
\begin{eqnarray}\label{eqL}
l=\frac{\frac{1}{M_0^2}-\frac{1}{M_1^2}+\ln M_0^2-\ln M_1^2}{\mu(\gamma+1)}.
\end{eqnarray}
Therefore, the maximal length of a duct for a subsonic flow accelerating from $M_0<1$ to the sonic speed ($M_1=1$) is
\begin{eqnarray}\label{eq29new}
L_{M_0}=\frac{\frac{1}{M_0^2}+\ln M_0^2-1}{\mu(\gamma+1)}>0.
\end{eqnarray}
So for a duct which is longer than $L_{M_0}$, chocking phenomena shall occur and such steady subsonic flow pattern is impossible. From \eqref{eqL}, we also solve $M(x^0)\in(0,1)$ for $x^0\in(0,L_{M_0})$ by
\begin{eqnarray}
\frac{1}{M(x^0)^2}+\ln M(x^0)^2=\frac{1}{M_0^2}+\ln M_0^2-\mu(\gamma+1)x^0.
\end{eqnarray}
Hence we may solve $p(x^0), \rho(x^0)$, $u(x^0)$, $E(x^0)$, $s(x^0)$ and $\theta(x^0)$ once their values on the entry or exit are given.

Therefore we constructed a family of special subsonic solutions to the Euler system \eqref{eq101}-\eqref{eq103}, and it is important to notice that they depend analytically on the parameters $\{\gamma>1, \mu>0, M_0\in(0,1), L\in(0, L_{M_0}), p(L)>0, s(0)>0\}$.

\subsubsection{Supersonic flows}
Suppose now  the flow is supersonic in the duct (i.e. $M>1$ for $x^0\in[0,L]$). We see that
\[
\frac{\dd u}{\dd x^0}<0,\quad
\frac{\dd \rho}{\dd x^0}>0,\quad
\frac{\dd p}{\dd x^0}>0,\quad
\frac{\dd M}{\dd x^0}<0,\quad \frac{\dd \theta}{\dd x^0}>0;
\]
that is, the density, pressure and temperature increase while the velocity and Mach number decrease, contrary to subsonic flows. For given $M_0>1$, if the flow is continuous, then it is always supersonic, and the maximal length of the duct is also given by \eqref{eq29new}.

We conclude that there is a family of special supersonic Fanno flows in the duct, and they depend analytically on the parameters $\{\gamma>1, \mu>0, M_0\in(1,\infty), L\in(0, L_{M_0}), p(0)>0, s(0)>0\}$.

\subsubsection{Transonic shock}
For this phenomena the flow is supersonic near the entry, with Mach number $M_0>1$ at $x^0=0$, and  a shock front intervenes in the duct, which ended the supersonic flow, and the flow behind of it is subsonic. Let $U_+$ (respectively $U_-$) be the state of the flow on the side behind (respectively, ahead) of the shock front. Then the following Rankine-Hugoniot jump conditions should hold:
\begin{eqnarray*}
\rho_-u_-&=&\rho_+u_+,\\
\rho_-u_-^2+p_-&=&\rho_+u_+^2+p_+,\\
\left(\frac{1}{2}u_-^2+\frac{\gamma p_-}{(\gamma-1)\rho_-}\right)\rho_-u_-&=& \left(\frac{1}{2}u_+^2+\frac{\gamma p_+}{(\gamma-1)\rho_+}\right)\rho_+u_+.
\end{eqnarray*}
From these we could solve
\begin{eqnarray}\label{eqmach+}
M_+^2&=&\frac{1+\frac{\gamma-1}{2}M_-^2}{\gamma M_-^2-\frac{\gamma-1}{2}},
\end{eqnarray}
where $M_+$ (respectively $M_-$) is the Mach number behind (respectively, ahead) of the transonic shock front.
The physical entropy condition requires that $M_+<M_-$, so only transonic shock with supersonic flow ($M_->1$) ahead of the shock front, and subsonic flow ($M_+<1$) behind of it is possible.

Let $L_1<L_{M_0}$ be the distance from the entry to the transonic shock front. Utilizing \eqref{eqL}, by
\[
L_1=\frac{\frac{1}{M_0^2}-\frac{1}{M_-^2}+\ln M_0^2-\ln M_-^2}{\mu(\gamma+1)},
\]
we could solve $M_-$. Then by \eqref{eqmach+}, we get $M_+$. So the maximal length from the transonic shock front to the exit is
\[
L_2=L_{M_+}=\frac{\frac{1}{M_+^2}+\ln M_+^2-1}{\mu(\gamma+1)}.
\]
Hence for given $M_0>1$, suppose that $L<L_1+L_2$, we may construct a family of special transonic shock solutions in the duct, and they depend analytically on the parameters $\{\gamma>1, \mu>0, M_0\in(1,\infty), L_1\in(0, L_{M_0}), L\in(0, L_1+L_2), p(L)>0, s(0)>0\}$.

\subsection{Main result}
In this paper we mainly concern existence of general subsonic Fanno flows that are obtained by multidimensional perturbations. To this end, we need to formulate a well-posed boundary value problem for the Euler system \eqref{eq101}-\eqref{eq103}. By some physical considerations (see discussions in \cite[p.706]{LXY2016} and \cite{CF}),
we prescribe the following boundary conditions on $\Sigma_0$:
\begin{eqnarray}\label{eq21}
E=E_0(x'),\quad s=s_0(x'), \quad u'=u'_0(x');
\end{eqnarray}
and the back pressure on  $\Sigma_1$:
\begin{eqnarray}
p=p_1(x'). \label{eq22}
\end{eqnarray}
Here $E_0, s_0$ and $p_1$ are given functions on $\T^2$, while $u'_0(x')$ is a given vector field on $\T^2$.

\medskip
\fbox{
	\parbox{0.90\textwidth}{
		Problem (S): Solve the Euler system \eqref{eq101}-\eqref{eq103} in $\Omega$, subjected to the boundary conditions \eqref{eq21}--\eqref{eq22}.}}

\begin{remark}
For a flow which is obtained by symmetric extension along lateral walls of the duct $D$ and hence periodic in $(x_1, x_2)$, the pressure, density, entropy, normal velocity  shall be even functions with respect to $x^1=k\pi, x^2=k\pi$ for $k\in\mathbb{Z}$, while $u^1$ (respectively $u^2$) shall be odd function along $x^1=k\pi$ (respectively $x^2=k\pi$). So problem (S) is actually a little bit different from the problem of $(x_1,x_2)$-periodic flows reduced by extension of flows in duct, since we neglect these symmetry properties. We do not care about this in-essential technical point here, which can be easily handle in the same way as in \cite{CY}, and we just focus on problem (S) below.
\end{remark}

\medskip

We now pick a special subsonic Fanno flow $U_b=(E_b(x^0), s_b(x^0), u'_b\equiv0, p_b(x^0))$ constructed in Section \ref{sec211}, which is also called a {\it background solution} in the sequel. The following is the main theorem we will prove in this paper.
\begin{theorem} \label{thm21}
Suppose that a background solution $U_b$ satisfies the S-Condition, and $\alpha\in (0,1)$. There exist positive constants $\varepsilon_0$ and $C$ depending only on the background solution $U_b$ and $L, \alpha,\mu, \gamma$ such that if
\begin{multline}\label{eq213}
\norm{E_0(x')-E_b(0)}_{C^{3,\alpha}(\T^2)}+
\norm{s_0(x')-s_b(0)}_{C^{3,\alpha}(\T^2)}+\norm{u_0'(x')}_{C^{3,\alpha}(\T^2;\mathbb{R}^2)}\\
+\norm{p_1(x')-p_b(L)}_{C^{3,\alpha}(\T^2)}
\le \varepsilon\le \varepsilon_0,
\end{multline}
then there is uniquely one solution $U$ to Problem (S),  with $p\in C^{3,\alpha}(\overline{\Omega})$, $s, E, u\in C^{2,\alpha}(\overline{\Omega})$, and
	\begin{eqnarray}\label{eq214}
	\norm{p-p_b}_{C^{3,\alpha}(\overline{\Omega})}+
	\norm{s-s_b}_{C^{2,\alpha}(\overline{\Omega})}+
	\norm{E-E_b}_{C^{2,\alpha}(\overline{\Omega})}+
	\norm{u'}_{C^{2,\alpha}(\overline{\Omega};\mathbb{R}^2)}
	\le C\varepsilon.
	\end{eqnarray}
\end{theorem}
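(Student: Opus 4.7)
The plan is to follow the decomposition framework of Liu--Xu--Yuan \cite{LXY2016} adapted to the presence of the friction term $\mathfrak{b}$. First I would rewrite \eqref{eq101}--\eqref{eq103} as a decoupled system consisting of transport equations along streamlines for the entropy $s$, the Bernoulli function $E$, and two vorticity-like scalars built from the tangential velocity $u'$, together with a second-order equation for the pressure $p$ that, near the subsonic background $U_b$, is uniformly elliptic. The entropy still satisfies $u\cdot\nabla s=0$ because $\mathfrak{b}$ is mechanical, whereas a direct computation using \eqref{eq102}--\eqref{eq103} yields $u\cdot\nabla E=-\mu(u^0)^3$. This is the crucial departure from \cite{LXY2016}: $E$ is no longer a Riemann invariant, and once it is expressed as an integral along streamlines it injects a nonlocal term into the second-order equation for $p$.

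Next I would set up an iteration in a closed ball $\mathcal{K}_\varepsilon$ of a product Hölder space whose size is controlled by the right-hand side of \eqref{eq214}. Given $\tilde U\in\mathcal{K}_\varepsilon$, I integrate the transport equations for $s$, $E$, and the two vorticity scalars along the streamlines of $\tilde u$ using the data \eqref{eq21} on $\Sigma_0$. This yields in particular a nonlocal representation of $E(x)$ as $E_0(X_0(x))$ minus the integral of $\mu(\tilde u^0)^2$ along the arc of the streamline from its foot $X_0(x)\in\Sigma_0$ to $x$, plus analogous formulas for $s$ and $u'$. Plugging these representations into the momentum and mass balances and linearizing yields a second-order equation for $p$ that is uniformly elliptic (since $M_b<1$), whose coefficients are $O(\varepsilon)$ perturbations of those of the background operator, and which carries a lower-order \emph{nonlocal} term inherited from $E$. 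The accompanying boundary conditions are periodic on $\T^2$, of oblique-derivative type on $\Sigma_0$ (from $u^0$ determined via Bernoulli's relation once $E$, $s$, $u'$ are known), and Dirichlet on $\Sigma_1$ supplied by \eqref{eq22}.

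The hardest step, and the content of Section \ref{sec4}, is solving this nonlocal mixed boundary value problem while keeping a sharp $C^{3,\alpha}$ estimate on $p$. The plan is to separate variables via Fourier expansion along $x'\in\T^2$, which reduces the nonlocal PDE to a family of one-dimensional integro-differential boundary value problems on $[0,L]$ indexed by $k\in\Z^2$. For each mode the local part is a coercive second-order ODE with mixed boundary conditions, and the nonlocal term becomes a Volterra-type integral operator along $x^0$; for $|k|$ large the modal operator dominates and the integral term is easily absorbed, while for the finitely many low modes the problem is analyzed by a direct Fredholm / open mapping argument, exploiting the integrability of the kernel along streamlines. Summing the modal estimates recovers the full Hölder bound $\norm{p-p_b}_{C^{3,\alpha}(\overline{\Omega})}\le C\varepsilon$.

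Finally I would close the outer iteration by showing that the map $\tilde U\mapsto U$ is a contraction in a slightly weaker norm, such as $C^{2,\alpha/2}$ — the standard device for absorbing the derivative loss in the nonlinear coefficients — and then invoke the Banach fixed point theorem. Smallness of $\varepsilon_0$ makes the Lipschitz constant strictly less than one, yielding a unique fixed point and hence the unique solution of Problem (S), while the full $C^{3,\alpha}$ regularity asserted in \eqref{eq214} is recovered from the a priori estimates. All the genuine new difficulty over \cite{LXY2016} is concentrated in the nonlocal term arising from $\mathfrak{b}$; the oblique boundary condition on $\Sigma_0$, the regularity of streamlines, and the matching of regularity between the transport and elliptic parts proceed essentially along the same lines as there.
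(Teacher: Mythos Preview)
Your outline is broadly aligned with the paper's strategy---decompose into transport equations plus a second-order elliptic equation for $p$, absorb the streamline integral for $E$ into a nonlocal term, solve the resulting mixed problem mode-by-mode via Fourier series on $\T^2$, and close with a Banach fixed-point argument in a weaker norm---but there is one genuine gap and one structural discrepancy worth flagging.

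The gap is that you never invoke the S-Condition, which is an explicit hypothesis of Theorem~\ref{thm21}. Your plan for the modal problems on $[0,L]$ says that for large $|k|$ the principal part dominates the Volterra term, and that the finitely many low modes are handled by ``a direct Fredholm / open mapping argument.'' But Fredholm alternative gives existence only once you have checked that the homogeneous modal problem has trivial kernel, and for the low modes there is no reason this should hold automatically: the nonlocal term is not small, the zeroth-order coefficient $e_3$ has no sign, and the Robin constant $\gamma_0$ on $\Sigma_0$ is negative, so neither maximum principle nor energy methods are available. In the paper this is exactly where the S-Condition enters (Definition~\ref{def41}): it is the assumption that, for every Fourier mode, the associated homogeneous two-point problem \eqref{eq415} has no solution, and Lemma~\ref{lem42} shows this fails for at most countably many $\mu$. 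You must make this dependence explicit; otherwise the low-mode step is unjustified. The paper then obtains the $C^{k,\alpha}$ a~priori estimate \eqref{eq418} by treating the nonlocal integral as an inhomogeneity, applying standard Schauder theory, and upgrading \eqref{eq417} to \eqref{eq418} via a compactness argument that uses the uniqueness just established---not by a high/low frequency splitting.

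The structural discrepancy is your use of ``two vorticity-like scalars'' for $u'$. The paper (following \cite{LXY2016}) transports the tangential components $u^\beta$ themselves via \eqref{eq39}, $D_u u^\beta+\rho^{-1}\partial_\beta p=0$, with data $u^\beta_0$ on $\Sigma_0$; this meshes directly with the boundary data \eqref{eq21} and avoids having to recover $u'$ from curl-type quantities by an auxiliary div--curl system. Your variant may well work, but it is not what the paper does and would require additional justification. Finally, the paper runs the contraction in the norm $\norm{\cdot}_2$ of \eqref{eq425} (i.e.\ $C^{2,\alpha}$ for $p$ and $C^{1,\alpha}$ for the rest), not $C^{2,\alpha/2}$; either choice closes, but you should say precisely which pair of norms you use and why one derivative is lost between the invariant set and the contraction metric.
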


\begin{remark}
The technical S-Condition is given by Definition \ref{def41} in Section \ref{sec4}. It is shown there that except
at most countable infinite numbers $\mu$, all the background solutions $U_b$ determined by the rest $\mu$ satisfy the S-Condition.
\end{remark}

\section{Reformulation of Problem (S)}\label{sec3}

The following is an important theorem established in  \cite[p.703]{LXY2016}), which is rewritten for use to our case, namely, $\Omega$ is a flat manifold. Here $D_u f=u\cdot\mathrm{grad} f$ for a vector field $u$ and a function $f$ on $\Omega$, and as a convention, repeated Roman indices will be summed up for $0,1,2$, while repeated Greek indices are to be summed over for $1,2$, except otherwise stated.
\begin{proposition}\label{prop31}
Suppose that $p \in C^2(\Omega)\cap C^1(\overline{\Omega})$, $\rho, u\in C^1(\overline{\Omega})$, and $\rho>0, u^0\ne0$ in $\overline{\Omega}$. Then  $p, \rho, u$ solve the system \eqref{eq101}-\eqref{eq103} in $\Omega$ if and only if they satisfy the following equations in $\Omega$:
\begin{eqnarray}
&&D_uE-\mathfrak{b}\cdot u=0,\label{eq36}\\
&&D_uA(s)=0,\label{eq37}\\
&&D_u\left(\frac{D_u p}{\gamma p}\right)-\di \left(\frac{\grad\,
p}{\rho}\right)- \p_ju^k\p_ku^j+\di\,\mathfrak{b}\nonumber \\ &&\qquad+L^0(\frac{D_up}{\gamma p}+\di\,u)+L^1(D_uE-\mathfrak{b}\cdot u)\nonumber\\
&&\qquad\qquad+L^2(D_uA(s))
+L^3(D_uu+\frac{\grad\,p}{\rho}-\mathfrak{b})=0,\label{eq38}\\
&&D_u u^\beta+\frac{\p_\beta\, p}{\rho}=0,\quad\beta=1,2,\label{eq39}
\end{eqnarray}
and the boundary condition
\begin{multline}
\frac{D_u p}{\gamma p}+\di\,u+L_1(D_uE-\mathfrak{b}\cdot u)+L_2(D_uA(s))
+L_3(D_uu+\frac{\grad\,p}{\rho}-\mathfrak{b})=0 \qquad \text{on}\quad \Sigma_0.\label{eq310}
\end{multline}
Here $L^0(\cdot)$ is a linear function, and  $L^k(\cdot)$, $L_k(\cdot)$ are smooth functions so that $L^k(0)=0, L_k(0)=0$ for $k=1,2,3$.
\end{proposition}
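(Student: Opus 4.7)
I will prove the proposition by establishing the two implications separately. The organizing scalar quantity is
\[
f := \frac{D_u p}{\gamma p} + \di\, u,
\]
which vanishes identically in $\Omega$ whenever \eqref{eq37} and the continuity equation \eqref{eq102} both hold, since then $D_u p = \gamma p\, D_u \rho / \rho$ and $D_u \rho / \rho + \di\, u = 0$. Thus, modulo the other equations in each list, the full Euler system is equivalent to $f\equiv 0$ in $\Omega$, and the role of \eqref{eq38} together with the boundary condition \eqref{eq310} is to encode a first-order linear transport equation $D_u f = \Phi(x)\, f$ in $\Omega$ with vanishing data $f|_{\Sigma_0}=0$.

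For the forward direction, assuming \eqref{eq101}-\eqref{eq103}, I verify each equation in turn. The tangential equations \eqref{eq39} follow by expanding the conservative form of \eqref{eq101} in the $x^1,x^2$ directions and subtracting $u^\beta$ times \eqref{eq102}. Equation \eqref{eq36} is obtained by contracting \eqref{eq101} with $u$ and comparing with \eqref{eq103} and \eqref{eq102}. Equation \eqref{eq37} follows from differentiating $p=A(s)\rho^\gamma$ along $u$ and combining with \eqref{eq102}, \eqref{eq36}. For \eqref{eq38}, starting from the identity $f=0$, applying $D_u$, and using the differential identity
\[
D_u(\di\, u) = \di(D_u u) - \p_j u^k \p_k u^j,
\]
together with the non-conservative form of \eqref{eq101} (i.e.\ $D_u u = \mathfrak{b} - \grad\, p/\rho$) to rewrite $\di(D_u u)$, produces exactly the main second-order terms in \eqref{eq38}. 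The smooth corrections $L^0,L^1,L^2,L^3$ (each vanishing at $0$) are then chosen to absorb the remaining terms, all of which are multiples of quantities that vanish under \eqref{eq36}, \eqref{eq37}, \eqref{eq39}, and the non-conservative momentum equation. Restricting $f=0$ to $\Sigma_0$ yields \eqref{eq310} by the same bookkeeping.

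The reverse direction is the heart of the matter. Assuming \eqref{eq36}-\eqref{eq39} and \eqref{eq310}, I first recover the non-conservative momentum equation $D_u u + \grad\, p/\rho - \mathfrak{b} = 0$: the tangential components are \eqref{eq39}, and for the $x^0$-component, the identity
\[
D_u E = u^0 D_u u^0 + u^\beta D_u u^\beta + \frac{D_u p}{\rho}
\]
(which uses only $p=A(s)\rho^\gamma$ and \eqref{eq37}), combined with \eqref{eq36} and \eqref{eq39}, reduces to $u^0(D_u u^0 + \p_0 p/\rho - \mathfrak{b}^0)=0$, whence the $x^0$-momentum follows since $u^0\neq 0$. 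Next, I define $f$ as above and compute $D_u f$ using the same $D_u(\di\, u)$ identity together with the just-recovered momentum equation; the main terms in \eqref{eq38} then equal $D_u f$ modulo expressions that vanish under \eqref{eq36}, \eqref{eq37}, \eqref{eq39}, and the momentum equation. The crucial feature is that $L^0(\cdot)$ is \emph{linear}, so the $L^0$ contribution to \eqref{eq38} is precisely a term proportional to $f$, converting \eqref{eq38} into a homogeneous transport equation $D_u f = \Phi(x)\, f$ with smooth coefficient $\Phi$. Analogously \eqref{eq310} reduces to $f|_{\Sigma_0}=0$. Since $u^0\neq 0$ in $\overline{\Omega}$, the streamlines of $u$ form a smooth foliation of $\Omega$ entering transversally through $\Sigma_0$, and uniqueness for the linear ODE along each streamline forces $f\equiv 0$ in $\Omega$. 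Finally, $f=0$ combined with \eqref{eq37} gives the continuity equation \eqref{eq102}; this in turn converts the non-conservative momentum back into the conservative form \eqref{eq101}; and \eqref{eq36} combined with \eqref{eq102} gives the energy equation \eqref{eq103}.

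The main obstacle is the careful algebraic identification of the correction functions $L^0, L^1, L^2, L^3$ and $L_1, L_2, L_3$ so that the scheme closes in both directions simultaneously. The frictionless version of this calculation is Proposition 2.1 of \cite{LXY2016}; the new ingredient here is the body force $\mathfrak{b} = (-\mu(u^0)^2, 0, 0)^\top$, which introduces explicit $\mathfrak{b}\cdot u$ and $\di\,\mathfrak{b}$ contributions in \eqref{eq36} and \eqref{eq38} and modifies the vector argument of $L^3, L_3$ to the momentum remainder $D_u u + \grad\, p/\rho - \mathfrak{b}$. The required bookkeeping is delicate but structurally parallel to the frictionless case, as the authors emphasize in the closing remark of Section \ref{sec1}.
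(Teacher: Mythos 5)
Your proposal is correct and takes essentially the approach the paper itself relies on: the paper offers no proof of Proposition \ref{prop31}, quoting it from \cite{LXY2016} (rewritten to include the friction term $\mathfrak{b}$), and your reconstruction---recovering the normal momentum equation from \eqref{eq36}, \eqref{eq37}, \eqref{eq39} and $u^0\neq 0$, then using the linearity of $L^0$ and the vanishing of the other residuals to turn \eqref{eq38}, \eqref{eq310} into a homogeneous transport equation for $f=\frac{D_u p}{\gamma p}+\di\,u$ with zero data on $\Sigma_0$---is exactly the decomposition mechanism of that cited result. The only point worth a word in a full write-up is the interpretation of $D_u(\di\,u)$ when $u$ is merely $C^1$, which is read through the recovered momentum equation (this is what the stated regularity hypotheses anticipate), but this is a routine technicality rather than a gap.
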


To formulate a nonlinear Problem (S1) which is equivalent to Problem (S), we need to compute the exact expressions of \eqref{eq38} and \eqref{eq310}, then specify the auxiliary functions $L^k, L_k$ appeared in Proposition \ref{prop31}.

\subsection{Problem (S1)}

\subsubsection{The equation of pressure}
We now compute the explicit expression of equation
\eqref{eq38}. It is straightforward to check that
\begin{eqnarray}\label{eq311}
&&D_u\left(\frac{D_up}{\gamma p}\right)-\di\,\left(\frac{\grad\, p}{\rho}\right)-\p_ju^k\p_ku^j
+\di\,\mathfrak{b}\nonumber\\
&=&\frac{1}{\gamma p}\left[\Big((u^0)^2-c^2\Big)\p_0^2p-c^2(\p_1^2p+\p_2^2p)\right]\nonumber\\
&&\qquad+\frac{1}{\gamma p}\left[u^0\p_0u^0\p_0p-\frac{(u^0)^2}{p}(\p_0p)^2+\frac{\gamma p}{\rho^2} \p_0\rho\p_0p\right]\nonumber\\
&&\qquad\qquad-(\p_0u^0)^2{-2\mu u^0\p_0u^0}+F_1(U),
\end{eqnarray}
where
\begin{eqnarray}\label{eq312}
F_1(U)=\sum_{(k,j)\ne(0,0)}\left(\frac{u^ku^j}{\gamma p}\p_{jk}p
+\frac{u^k}{\gamma p}\p_ku^j\p_j p-\frac{1}{\gamma p^2}u^ku^j\p_kp\p_jp-\p_ju^k\p_ku^j
+\frac{1}{\rho^2}\delta_{kj}\p_k\rho\p_j p\right),
\end{eqnarray}
and $\delta_{kj}$ is the standard Kronecker delta.
Replacing terms like $u^0\p_0u^0, (\p_0u^0)^2, (u^0)^2$
in \eqref{eq311} by using suitable equations \eqref{eq101}-\eqref{eq103} (this is why we introduced $L^k$),
after some straightforward computations, we get the identity
\begin{eqnarray}\label{eq313}
&&D_u\left(\frac{D_up}{\gamma p}\right)-\di\,\left(\frac{\grad\, p}{\rho}\right)-\p_ju^k\p_ku^j
+\di\,\mathfrak{b}\nonumber\\
&=&\frac{1}{\gamma	 p}\left[\Big(2E-\frac{\gamma+1}{\gamma-1}c^2\Big)\p_0^2p-c^2(\p_1^2p+\p_2^2p)\right]
-\frac{1}{\gamma p}2\mu(E-\frac{ c^2}{\gamma-1})\p_0p\nonumber\\
&&\qquad-\frac{2}{\gamma p^2}\left(E-\frac{c^2}{\gamma-1}+\frac{c^4}{4\gamma}\frac{1}{E-\frac{c^2}{\gamma-1}}
\right)(\p_0p)^2+2\mu^2\left(E-\frac{c^2}{\gamma-1}\right)\nonumber\\
&&\qquad\qquad+F_1+F_2,
\end{eqnarray}
with
\begin{eqnarray}\label{eq314}
F_2&=&-\left((u^1)^2+(u^2)^2\right)
	\left[\frac{1}{\gamma p}\p_0^2p
+\frac{(\p_0p)^2}{\gamma p^2}\left(-1+\frac{c^4}{\gamma}
\frac{1}{2E-\frac{2c^2}{\gamma-1}}\frac{1}{2E-
	\left((u^1)^2+(u^2)^2\right)-\frac{2c^2}{\gamma-1}}\right)\right]\nonumber\\
&&\qquad+\frac{1}{\gamma p}\left[\left(\mu\left((u^1)^2+(u^2)^2\right) -u^\beta\p_{\beta}u^0\right)\p_0p+\rho^{\gamma-1}\p_0p\frac{u^\beta}{u^0}\p_\beta A(s)\right]\nonumber\\
&&\qquad\qquad-\frac{u^\beta\p_\beta u^0}{(u^0)^2}\left(u^\beta\p_\beta u^0+2\frac{\p_0p}{\rho}\right)-\mu^2\left((u^1)^2+(u^2)^2\right).
\end{eqnarray}
Multiplying $\gamma p$ to both sides of \eqref{eq313}, and comparing it with \eqref{eq38}, we see  \eqref{eq38} is equivalent to the following second order equation of pressure
\begin{eqnarray}\label{eq315}
N(U)&\triangleq&\left[\Big(2E-\frac{\gamma+1}{\gamma-1}c^2\Big)\p_0^2p-c^2(\p_1^2p+\p_2^2p)\right]
-2\mu(E-\frac{ c^2}{\gamma-1})\p_0p\nonumber\\
&&\qquad-\frac{2}{ p}\left(E-\frac{c^2}{\gamma-1}+\frac{c^4}{4\gamma}\frac{1}{E-\frac{c^2}{\gamma-1}}\right)(\p_0p)^2
+2\mu^2\gamma p\left(E-\frac{c^2}{\gamma-1}\right)\nonumber\\
&&\qquad\qquad=F_3(U)\triangleq-\gamma p(F_1+F_2).
\end{eqnarray}

\subsubsection{The boundary conditions}
It follows, by setting  $$L_3(D_uu+\frac{\grad\,p}{\rho}-\mathfrak{b})=-\frac{1}{u^0}(D_uu^0+\frac{\p_0 p}{\rho}+\mu (u^0)^2),$$
we have the identity
\begin{eqnarray}\label{eq316}
&&\rho u^0\left(\frac{D_up}{\gamma p}+\di\, u+L_3(D_uu+\frac{\grad\,p}{\rho}-\mathfrak{b})\right)\nonumber\\
&=&\left(\frac{(u^0)^2}{c^2}-1\right)\p_0p-\mu\rho (u^0)^2
+\rho u^0\p_\beta u^\beta\nonumber\\
&&+{u^0u^\beta}\left(\frac{1}{c^2}+\frac{1}{(u^0)^2}
\right)\p_\beta p
+\frac{1}{\gamma-1}\frac{u^\beta}{u^0}\rho^{\gamma}\p_\beta A(s)
+\frac{\rho}{u^0} u^\sigma u^\beta\p_\beta u^\sigma-\frac{\rho u^\beta}{u^0}\p_\beta E.
\end{eqnarray}
Comparing this to \eqref{eq310}, it is a nonlinear
Robin condition for $p$ on $\Sigma_0$:
\begin{eqnarray}\label{eq317}
\p_0p -\frac{\mu\gamma(u^0)^2}{(u^0)^2-c^2} p
&=&G_1(U)+G_2(U),
\end{eqnarray}
with
\begin{eqnarray}\label{eq318}
G_1&\triangleq&-\frac{1}{\frac{(u^0)^2}{c^2}-1}\rho u^0\p_\beta u^\beta,\\
G_2&\triangleq&-\frac{1}{\frac{(u^0)^2}{c^2}-1}\left[{u^0u^\beta}\left(\frac{1}{c^2}+\frac{1}{(u^0)^2}
\right)\p_\beta p+\frac{1}{\gamma-1}\frac{u^\beta}{u^0}\rho^{\gamma}\p_\beta A(s)
\right.\nonumber\\
&&\qquad\left.+\frac{\rho}{u^0}u^\sigma u^\beta \p_\beta u^\sigma-\frac{\rho u^\beta}{u^0}\p_\beta E\right].\label{eq319}
\end{eqnarray}

\subsubsection{Problem (S1)}
From the above computations, by Proposition \ref{prop31}, we see that Problem (S) could be written equivalently as the following Problem (S1), for those unknowns $p,\rho, u$ with regularity as assumed in Proposition  \ref{prop31}.

\medskip
\fbox{
	\parbox{0.90\textwidth}{
		Problem (S1): {Solve  the system \eqref{eq36}, \eqref{eq37}, \eqref{eq315} and \eqref{eq39} in $\Omega$, subjected to the boundary conditions \eqref{eq21}, \eqref{eq22}, and \eqref{eq317}.}}}
\medskip

\subsection{Problem (S2)}

In this subsection we separate the linear main parts from the nonlinear equations \eqref{eq36}, \eqref{eq315} and \eqref{eq317}, and write them in the form $\mathcal{L}(U-U_b)=\mathcal{N}(U-U_b)$, where $\mathcal{L}$ is a linear operator, and $\mathcal{N}(U)$ are certain higher-order terms defined below.

\begin{definition}\label{def301}
	For $U=p, E, \rho, s, u$ etc., set $\hat{U}=U-U_b.$ A higher-order
	term is an expression containing either
	
	(i) $\hat{U}|_{\p\Omega}$ and/or its first-order tangential derivatives;
	
	\noindent
	or
	
	(ii) product of $\hat{U}$, and/or their derivatives $D\hat{U}, D^2\hat{U}$, where $D^ku$ is a $k^{th}$-order  derivative of $u$.
\end{definition}

\subsubsection{Linearization of Bernoulli law}
Recall that for the background solution, $E_b$ solves the equation $\frac{\dd }{\dd x^0}E_b=-\mu u_b^2$. So we get the identity
\begin{eqnarray}\label{eqber}
D_uE-\mathfrak{b}\cdot u&=&D_u\hat{E}+\mu u^0\Big((u^0)^2-(u_b)^2\Big)\nonumber\\
&=&D_u\hat{E}+2\mu u^0\Big(\hat{E}-\frac{1}{\gamma-1}(c^2-c_b^2)\Big)-\mu u^0\left((u^1)^2+(u^2)^2\right).
\end{eqnarray}
Using expressions like
\begin{eqnarray}
c^2-c_b^2
&=&\frac{\gamma-1}{\rho_b}\hat{p}+\rho_b^{\gamma-1}\widehat{A(s)}+O(1)
(|\hat{p}|^2+|\widehat{A(s)}|^2),
\label{eqc}
\end{eqnarray}
where $O(1)$ represents a bounded quantity with a bound depending only on the background solution ${U_b}$ and $|U-{U_b}|$, after straightforward calculations,  \eqref{eq36} is equivalent to
\begin{eqnarray}\label{eqhatber}
D'_u \hat{E}+2\mu \hat{E}&\triangleq&\p_0 \hat{E}+\frac{u^1}{u^0}\p_1 \hat{E}+\frac{u^2}{u^0}\p_2 \hat{E}+2\mu \hat{E}\nonumber\\
&=&\frac{2\mu }{\gamma-1}\rho_{b}^{\gamma-1}\widehat{A(s)}+\frac{2\mu}{\rho_b} \hat{p}+H,
\end{eqnarray}
with
\begin{eqnarray}\label{eqH}
H=\mu\left((u^1)^2+(u^2)^2\right)+O(1)\frac{2\mu}{\gamma-1}(|\hat{p}|^2+|\widehat{A(s)}|^2).
\end{eqnarray}

\subsubsection{Linearization of pressure's equation}
For \eqref{eq315}, note that $N(U_b)=0$,  we may get a linear operator $\mathcal{L}$ and write $\mathcal{L}(\hat{U})-F_4(U)=N(U)-N(U_b)$, with $F_4(U)$ a higher-order term. Then  \eqref{eq315} becomes $\mathcal{L}(\hat{U})=F_3(U)+F_4(U)$.
In fact, using \eqref{eqc}, by setting $t={u_b^2}/{c_b^2}=M_b^2\in (0,1),$
direct computation yields that \eqref{eq315} can be written as
\begin{eqnarray}\label{eq324}
\mathcal{L}(\hat{p})&\triangleq&(t-1)\p_0^2\hat{p}-\p_1^2\hat{p}-\p_2^2\hat{p}+\mu
d_1(t)\p_0\hat{p}+\mu^2d_2(t)\hat{p}+\mu^2\rho_bd_3(t)\hat{E}
+\mu^2\rho_b^\gamma d_4(t)\widehat{A(s)}\nonumber\\
&=&F_5\triangleq \frac{1}{c_b^2}(F_3+F_4).
\end{eqnarray}

We easily see that \eqref{eq324} is an elliptic equation of (perturbed) pressure. The coefficients in \eqref{eq324} are given by
\begin{eqnarray}
d_1(t)&\triangleq&
-\frac{1}{t-1}\left((1+2\gamma)t^2+t-2\right),\label{eq325}\\
d_2(t)&\triangleq&\frac{1}{(t-1)^3}\left(\gamma(1+\gamma)t^4-
2\gamma(1+\gamma)t^3-(\gamma-3)t^2-2(4+\gamma)t+8\right), \label{eq326}\\
d_3(t)&\triangleq&\frac{1}{(t-1)^3}\Big(\gamma t^2+3t-4\Big),\label{eq327}\\
d_4(t)&\triangleq&-\frac{1}{\gamma-1}\frac{1}{(t-1)^3}\left(\gamma(\gamma-1)t^3+(5\gamma-3)
t^2-2(\gamma-4)t-8\right),\label{eq328}
\end{eqnarray}
and
\begin{eqnarray}\label{eq329}
-F_4
&=&\left(2\hat{E}-\frac{\gamma+1}{\gamma-1}(c^2-c_b^2)\right)\p_0^2\hat{p}
-(c^2-c_b^2)\p_\beta^2\hat{p}
+2\mu^2\gamma\hat{p}\left(\hat{E}-\frac{1}{\gamma-1}
(c^2-c_b^2)\right)\nonumber\\
&&\quad-2\mu\p_0\hat{p}\left(\hat{E}-\frac{1}{\gamma-1}(c^2-c_b^2)\right)
-\p_0(p+p_b)\p_0\hat{p}\left[\frac{|u|^2}{p}-\frac{u_b^2}{p_b}\right]
-\frac{u_b^2}{p_b}(\p_0\hat{p})^2\nonumber\\
&&\quad\quad+(\p_0p_b)^2\frac{\hat{p}}{p_b}\left[\frac{|u|^2}{p}-\frac{u_b^2}{p_b}\right]
-\frac{\p_0(p+p_b)\p_0\hat{p}}{\gamma}\left(\frac{c^4}{p|u|^2}-\frac{c_b^4}{p_bu_b^2}\right)
-\frac{c_b^4}{\gamma p_bu_b^2}(\p_0\hat{p})^2\nonumber\\
&&\quad\quad\quad-\frac{(\p_0p_b)^2}{\gamma}\left(\frac{1}{|u|^2}-\frac{1}{u_b^2}\right)
\left[\left(\frac{c^4}{p}-\frac{c_b^4}{p_b}\right)+\frac{2c_b^4}{p_bu_b^2}
\left(\frac{c^2-c_b^2}{\gamma-1}-\hat{E}\right)\right]\nonumber\\
&&\quad\quad\quad\quad-\frac{(\p_0p_b)^2}{\gamma u_b^2}\left[\left(\frac{c^2+c_b^2}{p}-\frac{2c_b^2}{p_b}\right)(c^2-c_b^2)
-\frac{c_b^4}{p_b}\hat{p}\left(\frac{1}{p}-\frac{1}{p_b}\right)\right]
\nonumber\\
&&\quad\quad\quad\quad\quad\quad-\left[\frac{\gamma+1}{\gamma-1}\p_0^2p_b
+\frac{2\mu^2\gamma}{\gamma-1}p_b
-\frac{2\mu}{\gamma-1}\p_0p_b
-\frac{2}{\gamma-1}\frac{(\p_0p_b)^2}{p_b}\right.\nonumber\\
&&\quad\quad\quad\quad\quad\quad\quad\left.+\frac{2c_b^2}{\gamma p_b u_b^2}(\p_0p_b)^2\left(1+\frac{1}{\gamma-1}\frac{c_b^2}{u_b^2}\right)\right]
\times\left[O(1)(|\hat{p}|^2+|\widehat{A(s)}|^2)\right].
\end{eqnarray}

\subsubsection{Linearization of boundary condition}
Note that ${p_b}$ solves \eqref{eq27}, so \eqref{eq317} is equivalent to
\begin{eqnarray}\label{eq330}
\p_0(p-p_b)-\mu\gamma\left(\frac{ p
	(u^0)^2}{(u^0)^2-c^2}-\frac{p_b
	(u_b)^2}{(u_b)^2-c_b^2}\right)&=&G_1+G_2.
\end{eqnarray}
Using expressions in \eqref{eqc},  \eqref{eq330} could be written as
\begin{eqnarray}\label{eq331}
\p_0\hat{p}+\gamma_0\hat{p}=G\triangleq G_1+G_2+G_3,
\end{eqnarray}
with $\gamma_0$ a negative constant determined by the background solution at $x^0=0$:
\begin{eqnarray}\label{eq332}
\gamma_0\triangleq-\mu\frac{\gamma M_b(0)^4-M_b(0)^2+2}{(M_b(0)^2-1)^2}<0,
\end{eqnarray}
and
\begin{eqnarray}\label{eq333}
G_3&=&\mu\gamma\left\{\left(\frac{(u^0)^2}{(u^0)^2-c^2}-\frac{u_b^2}{u_b^2-c_b^2}\right)\hat{p}
+p_b\frac{u_b^2(c^2-c_b^2)-c_b^2((u^0)^2-u_b^2)}{u_b^2-c_b^2}
\left[\frac{1}{(u^0)^2-c^2}-\frac{1}{u_b^2-c_b^2}\right]\right.\nonumber\\
&&\qquad\left.-\frac{p_bc_b^2}{(u_b^2-c_b^2)^2}[2\underline{(E-E_b)-
(u^1)^2-(u^2)^2}]+\frac{2p_bE_b}{(u_b^2-c_b^2)^2}\left[O(1)(|\hat{p}|^2+|\widehat{A(s)}|^2)
\right.\right.\nonumber\\
&&\qquad\qquad\left.\left.+\rho_b^{\gamma-1}\underline{({A(s)-A(s_b)})}
\right]\right\}.
\end{eqnarray}
We note that $G_2, G_3$ are higher-order terms (the terms with underlines are given by boundary data, so fulfill the item $(\rmnum{1})$ in Definition \ref{def301}), while $G_1$ depends on the boundary values of $u'$ on $\Sigma_0$.

\subsubsection{Problem (S2)}
We now reformulate Problem (S1) equivalently as the following Problem (S2).

\medskip
\fbox{
	\parbox{0.90\textwidth}{
		Problem (S2): Solve functions $U=(\hat{E}, \widehat{A(s)}, \hat{p}, u')$ that satisfying the following problems \eqref{eq334}--\eqref{eq337}.}}

\begin{eqnarray}
&&\begin{cases}\label{eq334}
 D'_u \hat{E}+2\mu \hat{E}=\frac{2\mu }{\gamma-1}\rho_{b}^{\gamma-1}\widehat{A(s)}+\frac{2\mu}{\rho_b} \hat{p}+H&\text{in}\quad \Omega,\\
\hat{E}=E_0(x')-E_b(0)&\text{on}\quad \Sigma_0;
\end{cases}\\
&&\begin{cases}\label{eq335}
D_u\widehat{A(s)}=0&\text{in}\quad \Omega,\\
\widehat{A(s)}=A(s_0(x'))-A(s_b(0))&\text{on}\quad \Sigma_0;
\end{cases}\\
&&\begin{cases}\label{eq336}
\mathcal{L}(\hat{p})=F_5& \text{in}\quad \Omega,\\
\p_0\hat{p}+\gamma_0\hat{p}=G &\text{on}\quad \Sigma_0,\\
\hat{p}=p_1(x')-p_b(L) &\text{on}\quad \Sigma_1;
\end{cases}\\
&&\begin{cases}\label{eq337}
D_u u^\beta+\frac{\p_\beta\, p}{\rho}=0& \text{in}\quad \Omega,\ \ \beta=1,2,\\
u^\beta=u^\beta_0(x') &\text{on}\quad \Sigma_0.
\end{cases}
\end{eqnarray}
Note that  \eqref{eq336} is a mixed boundary value problem of a second order elliptic equation, while \eqref{eq334}, \eqref{eq335} and \eqref{eq337} are Cauchy problems of transport equations.

\subsection{Problem (S3)}
Since the elliptic problem \eqref{eq336} is coupled with the other hyperbolic problems, we need to further reformulate Problem (S2) equivalently as the following Problem (S3).

We firstly consider the Cauchy problem \eqref{eq334}:
\begin{eqnarray}
\begin{cases}\label{eq41}
D'_u \hat{E}+2\mu \hat{E}=\frac{2\mu }{\gamma-1}\rho_{b}^{\gamma-1}\widehat{A(s)}+\frac{2\mu}{\rho_b} \hat{p}+H&\text{in}\quad \Omega,\\
\hat{E}=E_0(x')-E_b(0)&\text{on}\quad \Sigma_0.
\end{cases}
\end{eqnarray}
For the vector field $u$ defined in $\Omega$, we consider the non-autonomous vector field $\frac{u'}{u^0}(x^0, x')$ defined for $x'=(x^1, x^2)\in \T^2$ and $x^0\in[0, L]$. For $\bar{x}\in \T^2$, we write the integral curve passing $(0, \bar{x})$
as $x'=\varphi(x^0, \bar{x})$, which is a $C^{k,\alpha}$ function in $\Omega$ if $u\in C^{k,\alpha}(\bar{\Omega})$ and $u^0>\delta$ for a positive  constant $\delta$, and $k\in\mathbb{N}$. For fixed $x^0$, the map $\varphi_{x^0}: \T^2\to \T^2, \ \bar{x}\mapsto x'=\varphi(x^0, \bar{x})$  is a $C^{k,\alpha}$ homeomorphism.

\begin{lemma}\label{lem41}
Suppose that $u=(u^0,u')\in C^{0,1}$ and $u^0>\delta$. There is a positive constant $C=C(\delta,L)$ so that for any $x'\in \T^2$ and $x_0\in[0, L]$, it holds
\begin{eqnarray}\label{eq42}
\left|(\varphi_{x^0})^{-1}x'-x'\right|\le C\norm{u'}_{C^0(\bar{\Omega})}.
\end{eqnarray}
\end{lemma}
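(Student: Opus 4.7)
My plan is to estimate the displacement along the integral curves of the vector field $u'/u^0$ directly by a one-dimensional integration. Fix $x^0\in[0,L]$ and $x'\in\T^2$, and set $\bar{x}:=(\varphi_{x^0})^{-1}x'\in\T^2$, so that by the definition of $\varphi$ given just before the lemma, the curve $s\mapsto\varphi(s,\bar{x})$ on $[0,L]$ satisfies the Cauchy problem
\[
\frac{\dd}{\dd s}\varphi(s,\bar{x})=\frac{u'(s,\varphi(s,\bar{x}))}{u^0(s,\varphi(s,\bar{x}))},\qquad \varphi(0,\bar{x})=\bar{x},
\]
and in particular $\varphi(x^0,\bar{x})=x'$.

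The key step is then to integrate this ODE from $0$ to $x^0$, apply the triangle inequality, and use the hypotheses $u^0>\delta$ and $x^0\le L$:
\[
|x'-\bar{x}|=\left|\int_0^{x^0}\frac{u'(s,\varphi(s,\bar{x}))}{u^0(s,\varphi(s,\bar{x}))}\,\dd s\right|\le\frac{L}{\delta}\norm{u'}_{C^0(\bar{\Omega})}.
\]
Since $|x'-\bar{x}|=|(\varphi_{x^0})^{-1}x'-x'|$ by construction, this yields the desired bound with $C=L/\delta$.

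There is essentially no genuine obstacle here; the estimate merely records the intuitively obvious fact that a small tangential velocity produces a flow map close to the identity. The only technical points to verify are that the integral curve $\varphi(\cdot,\bar{x})$ is well-defined on all of $[0,L]$, which follows from the Lipschitz regularity of $u'/u^0$ (guaranteed by $u\in C^{0,1}$ together with the uniform lower bound $u^0>\delta$) and from the compactness of the target torus $\T^2$, which prevents escape in the transverse direction; and that $\varphi_{x^0}:\T^2\to\T^2$ is indeed a homeomorphism so that the inverse appearing in the statement makes sense, which has already been asserted in the discussion preceding the lemma.
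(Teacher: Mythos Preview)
Your proof is correct and follows essentially the same approach as the paper: set $\bar{x}=(\varphi_{x^0})^{-1}x'$, integrate the defining ODE from $0$ to $x^0$, and bound the resulting integral using $u^0>\delta$ and $x^0\le L$. The paper's proof is in fact terser, omitting the remarks on well-posedness and the explicit constant $C=L/\delta$ that you included.
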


\begin{proof}
Let $\bar{x}=(\varphi_{x^0})^{-1}x'$. Then
$\left|\varphi_{x^0}(\bar{x})-\bar{x}\right|\le\int_{0}^{x^0}\left|\frac{u'}{u^0}(\tau, \varphi(\tau,\bar{x}))\right|\,\dd \tau\le C\norm{u'}_{C^0(\bar{\Omega})}$
as desired.
\end{proof}

We write the unique solution to the linear transport equation \eqref{eq41} as follows:
\begin{eqnarray}\label{eq43}
\hat{E}(x)&=&\hat{E}(x^0, \varphi_{x^0}(\bar{x}))=e^{-2\mu x^0}\hat{E}_0(\bar{x})\nonumber\\
&&+\int_{0}^{x^0}e^{2\mu(\tau-x^0)}    \left(\frac{2\mu}{\gamma-1}\rho_{b}^{\gamma-1}\widehat{A(s)}+\frac{2\mu}{\rho_b} \hat{p}+H\right)(\tau, \varphi_\tau(\bar{x}))\,\dd \tau.
\end{eqnarray}
Set
\begin{eqnarray}\label{eqF6}
F_6=-\mu^2\rho_bd_3(t)\int_{0}^{x^0}e^{2\mu(\tau-x^0)}    \left(\frac{2\mu}{\rho_b} \hat{p}(\tau, \varphi_\tau(\bar{x}))-\frac{2\mu}{\rho_b} \hat{p}(\tau, x')+H(\tau, \varphi_\tau(\bar{x}))\right)\,\dd \tau,
\end{eqnarray}
which is a higher order term (note that $\p_\beta \hat{p}$ itself is small, and $\varphi_{x^0}$ is close to the identity map since $u'$ is nearly zero, so $|(\varphi_{x^0})^{-1}(x')-x'|$ is small by \eqref{eq42}). Then
we could write the elliptic equation in \eqref{eq336}  as
\begin{eqnarray}\label{eq45}
&&\Big(t(x^0)-1\Big)\p_0^2\hat{p}-\p_1^2\hat{p}-\p_2^2\hat{p}+\mu d_1(t)\p_0\hat{p}+\mu^2d_2(t)\hat{p}
+2\mu^3e^{-2\mu x^0}\rho_bd_3(t)
\int_{0}^{x^0}\frac{ e^{2\mu\tau}}{\rho_b(\tau)}\hat{p}(\tau, {x}')\,\dd \tau
\nonumber\\
&=&-\mu^2\rho_b^\gamma d_4(t)\widehat{A(s)}-\mu^2 e^{-2\mu x^0}\rho_bd_3(t)    \left(\hat{E}_0(\bar{x})
+\int_{0}^{x^0}\frac{2\mu}{\gamma-1}e^{2\mu\tau}\rho_{b}^{\gamma-1}    \widehat{A(s)}(\tau, \varphi_\tau(\bar{x}))\,\dd \tau\right)\nonumber\\
&&\qquad+F_5+F_6.
\end{eqnarray}

Now set
\begin{equation*}
\begin{array}{lllrll}
e_1(x^0)& = &(t(x^0)-1)<0,&e_2(x^0)&=&{\mu}d_1(t(x^0)),\\
e_3(x^0)& = &\mu^2d_2(t(x^0)),&
e_4(x^0)&=&2\mu^3e^{-2\mu x^0}\rho_b(x^0)d_3(t(x^0)),\\
e_5(x^0)& = &-\mu^2(\rho_b(x^0))^{\gamma}d_4(t(x^0)),&
e_6(x^0)&=&-\mu^2e^{-2\mu x^0}\rho_b(x^0)d_3(t(x^0)),\\
b(\mu,\tau)&=&\frac{ e^{2\mu \tau}}{\rho_b(\tau)},& F& =& F_5+{F}_6,\\
\end{array}
\end{equation*}
and
\begin{eqnarray}
F_0&=&e_5(x^0)\widehat{A(s)}+e_6(x^0)\left(\hat{E}_0(\bar{x})+\int_{0}^{x^0}\frac{2\mu}{\gamma-1}e^{2\mu\tau}\rho_{b}^{\gamma-1}\widehat{A(s)}(\tau, \varphi_\tau(\bar{x}))\,\dd \tau\right).\label{eqF0}
\end{eqnarray}
Then equation \eqref{eq45} simply reads
\begin{eqnarray}\label{eq47}
\mathfrak{L}(\hat{p})&\triangleq& e_1(x^0)\p_0^2\hat{p}-\p_1^2\hat{p}-\p_2^2\hat{p}+e_2(x^0)\p_0\hat{p}+e_3(x^0)\hat{p}
+e_4(x^0)\int_{0}^{x^0}b(\mu,\tau) \hat{p}(\tau, {x}')\,\dd \tau\nonumber \\
&=&F_0+F.
\end{eqnarray}
Note that there is a nonlocal term $e_4(x^0)\int_{0}^{x^0}{b}(\mu,\tau)\hat{p}(\tau, {x}')\,\dd \tau$. This is quite different from \cite{LXY2016} and shows the spectacular influence of friction. So problem \eqref{eq336} can be rewritten as follows:
\begin{eqnarray}
\begin{cases}\label{eq48}
\mathfrak{L}(\hat{p})=F_0+F& \text{in}\quad \Omega,\\
\p_0\hat{p}+\gamma_0\hat{p}=G &\text{on}\quad \Sigma_0,\\
\hat{p}=p_1(x')-p_b(L) &\text{on}\quad \Sigma_1.
\end{cases}
\end{eqnarray}

We then state Problem (S3), which is equivalent to Problem (S2), as can be seen from the above derivations.

\smallskip
\fbox{
\parbox{0.90\textwidth}{
Problem (S3): Find functions $U=(\hat{E}, \widehat{A(s)}, \hat{p}, u')$ defined in $\Omega$ that solve problems  \eqref{eq334}, \eqref{eq335}, \eqref{eq48} and \eqref{eq337}.}}

\section{A linear second order nonlocal elliptic equation with mixed boundary conditions}\label{sec4}

To attack problem \eqref{eq48},  we study in this section the linear second order nonlocal elliptic equation subjected to Robin condition on $\Sigma_0$ and Dirichlet condition on $\Sigma_1$:
\begin{eqnarray}\label{eq49}
\begin{cases}
\mathfrak{L}(\hat{p})=h(x)& \text{in}\quad \Omega,\\
\p_0\hat{p}+\gamma_0\hat{p}=g_0(x') &\text{on}\quad \Sigma_0,\\
\hat{p}=g_1(x') &\text{on}\quad \Sigma_1.
\end{cases}
\end{eqnarray}
Here $h\in C^{k-2,\alpha}(\bar{\Omega}), g_0\in C^{k-1,\alpha}(\T^2)$ and $g_1\in C^{k,\alpha}(\T^2)$ are given nonhomogeneous terms, and $k=2,3,\ldots$

The interesting part is that the elliptic operator $\mathfrak{L}$ contains a nonlocal term. So we could not use directly maximum principles or energy estimates, and we need the S-Condition to avoid some possible spectrums.

\subsection{Uniqueness of solutions}
We firstly study under what conditions a strong solution $\hat{p}$ to problem \eqref{eq49} in Sobolev space $H^2(\Omega)$ is unique. To this end, we consider the homogeneous problem
\begin{eqnarray}\label{eq410}
\begin{cases}
\mathfrak{L}(\hat{p})=0& \text{in}\quad \Omega,\\
\p_0\hat{p}+\gamma_0\hat{p}=0 &\text{on}\quad \Sigma_0,\\
\hat{p}=0 &\text{on}\quad \Sigma_1.
\end{cases}
\end{eqnarray}

By Sobolev Embedding Theorem, $H^2(\Omega)$ is embedded into $C^{0,\frac12}(\bar{\Omega})$, so the solution is bounded.  Now considering
$e_4(x^0)\int_{0}^{x^0}{b}(\mu,\tau) \hat{p}(\tau, {x}')\,\dd \tau$
as  a nonhomogeneous term,   the standard theory of second order elliptic equations with Dirichlet and oblique derivative conditions \cite{GT} imply that for any $\alpha\in (0,\frac12)$, $\hat{p}$ belongs to  $C^{2,\alpha}(\bar{\Omega})$. Then using a standard boot-strap argument, we deduce that if $\hat{p}\in H^2(\Omega)$ is a strong solution to \eqref{eq410}, then $\hat{p}$ is a classical solution and  belongs to $C^\infty(\bar{\Omega})$.

Now we wish to show that $\hat{p}\equiv0$. The idea is to use the method of separation of variables via the Fourier series.

Denote $m=(m_1,m_2)$, then by the above consideration of regularity, we could  write
\begin{eqnarray}\label{eq411}
\hat{p}(x)&=&\sum_{m_1,m_2=0}^{\infty}\lambda_{m}\left\{ p_{1,m}(x^0)\cos(m_1x^1)\cos(m_2x^2)+p_{2,m}(x^0)\sin(m_1x^1)\cos(m_2x^2)\right.\nonumber\\
&&\qquad\left.+p_{3,m}(x^0)\cos(m_1x^1)\sin(m_2x^2)+p_{4,m}(x^0)\sin(m_1x^1)\sin(m_2x^2)\right\},
\end{eqnarray}
with the coefficients
\begin{eqnarray*}
\lambda_{m}=
\begin{cases}
\frac{1}{4}& \text{if}\quad m_1=m_2=0,\\
\frac{1}{2} &\text{if only one of}\ \ m_1,\, m_2\ \ \text{is}\ \ 0,\\
1 &\text{if}\quad m_1>0,m_2>0.
\end{cases}
\end{eqnarray*}

Recall that $x=(x^0, x^1,x^2)$,  then the coefficients in \eqref{eq411} are given by
\begin{eqnarray*}
p_{1,m}(x^0)&=&\frac{1}{\pi^2}\int_{\T^2}\hat{p}(x)\cos(m_1x^1)\cos(m_2x^2)\,\dd x^1\dd x^2,\\
p_{2,m}(x^0)&=&\frac{1}{\pi^2}\int_{\T^2}\hat{p}(x)\sin(m_1x^1)\cos(m_2x^2)\,\dd x^1\dd x^2,\\
p_{3,m}(x^0)&=&\frac{1}{\pi^2}\int_{\T^2}\hat{p}(x)\cos(m_1x^1)\sin(m_2x^2)\,\dd x^1\dd x^2,\\
p_{4,m}(x^0)&=&\frac{1}{\pi^2}\int_{\T^2}\hat{p}(x)\sin(m_1x^1)\sin(m_2x^2)\,\dd x^1\dd x^2.\\
\end{eqnarray*}
For $\hat{p}\in C^{k,\alpha}(\bar{\Omega})$ and $k\ge 2$, we easily deduce that  $p_{i,m}(x^0)$ $(i=1,2,3,4)$ belongs to $C^{k,\alpha}([0, L])$ for all $k$, and it is also true that the series \eqref{eq411} converges uniformly in $C^{k-2}(\bar{\Omega})$.

Substituting \eqref{eq411} into \eqref{eq410}, for $x^0\in[0, L]$,  each $p_{i,m}(x^0)$ solves the following nonlocal ordinary differential equation:
\begin{eqnarray}\label{eq412}
e_1(x^0)p_{i,m}''+e_2(x^0)p_{i,m}'+(e_3(x^0)
+|m|^2)p_{i,m}+e_4(x^0)\int_{0}^{x^0}{b}(\mu,\tau) p_{i,m}(\tau)\,\dd \tau=0,
\end{eqnarray}
and the two-point boundary conditions:
\begin{eqnarray}\label{eq413}
p_{i,m}'(0)+\gamma_0p_{i,m}(0)=0 ,\qquad p_{i,m}(L)=0.
\end{eqnarray}
We need to find conditions to guarantee that all $p_{i,m}\,  ( i=1,2,3,4)$ are zero.

Suppose that $p_{i,m}(0)=0,$  we set $\mathcal{P}_{i,m}(x^0)=\int_{0}^{x^0}{b}(\mu,\tau) p_{i,m}(\tau)\,\dd \tau$. Then the system \eqref{eq412} and \eqref{eq413} can be rewritten as
\begin{eqnarray}\label{eq-pp}
\begin{cases}
\tilde{e}_1(x^0)\mathcal{P}_{i,m}'''
+\tilde{e}_2(x^0)\mathcal{P}''_{i,m}+\tilde{e}_3(x^0)\mathcal{P}'_{i,m}+e_4(x^0)\mathcal{P}_{i,m}=0,\\
\mathcal{P}_{i,m}(0)=\mathcal{P}_{i,m}'(0)=\mathcal{P}_{i,m}''(0)=0,\\
\mathcal{P}_{i,m}'(L)=0.
\end{cases}
\end{eqnarray}
Here we define
\begin{align*}
\tilde{e}_1(x^0)=&\frac{e_1(x^0)}{{b}(\mu,x^0)}<0,\\
\tilde{e}_2(x^0)=&\left(\frac{e_2(x^0)}{{b}(\mu,x^0)}-\frac{2e_1(x^0){b}'(\mu,x^0)}{{b}^2(\mu,x^0)}
\right),\\
\tilde{e}_3(x^0)=&\left(\frac{(e_3(x^0)+|m|^2)}{{b}(\mu,x^0)}-\frac{e_2(x^0){b}'(\mu,x^0)
+e_1(x^0){b}''(\mu,x^0)}{{b}^2(\mu,x^0)}
+\frac{2e_1(x^0)({b}'(\mu,x^0))^2}{{b}^3(\mu,x^0)}
\right),
\end{align*}
and $b'(\mu,x_0)=\p b(\mu,x_0)/\p x^0, b''(\mu,x_0)=\p^2 b(\mu,x_0)/{(\p x^0)}^2$.
By uniqueness of solutions of Cauchy problem of ordinary differential equations, obviously one has that $p_{i,m}\equiv 0$ in $[0, L]$.

If $p_{i,m}(0)\ne0$ for some $i, m$, we set $w_{i,m}(x^0)=\frac{p_{i,m}(x^0)}{p_{i,m}(0)}$ and $\mathcal{W}_{i,m}(x^0)=\int_{0}^{x^0}{b}(\mu,\tau) w_{i,m}(\tau)\,\dd \tau$. Then it solves
\begin{eqnarray}\label{eq415}
\begin{cases}
\tilde{e}_1(x^0)\mathcal{W}_{i,m}'''
+\tilde{e}_2(x^0)\mathcal{W}''_{i,m}+\tilde{e}_3(x^0)\mathcal{W}'_{i,m}+e_4(x^0)\mathcal{W}_{i,m}=0,\\
\mathcal{W}_{i,m}(0)=0,\quad \mathcal{W}_{i,m}'(0)=b(\mu,0),
\quad \mathcal{W}_{i,m}''(0)=b'(\mu,0)-\gamma_0b(\mu,0),\\
\mathcal{W}_{i,m}'(L)=0.
\end{cases}
\end{eqnarray}

\begin{definition}\label{def41}
We say a background solution $U_b$ satisfies the {\it S-Condition}, if for each $i=1,2,3,4$ and $m\in\Z^2$,  problem \eqref{eq415} does {\it not} have a classical solution.
\end{definition}

If the background solution $U_b$ satisfies the S-Condition, then all $p_{i,m}$ are zero, hence problem \eqref{eq410} has only the trivial solution. Our purpose below is to show theoretically that almost all background solutions satisfy the S-Condition. Actually, we have the following lemma.

\begin{lemma}\label{lem42}
There exists a set $\mathcal{S}\subset(0, +\infty)$ of at most countable infinite points such that the background solutions $U_b$ determined by $\mu\in(0, +\infty)\setminus \mathcal{S}$ satisfy the S-Condition.
\end{lemma}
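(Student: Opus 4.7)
The plan is to recast the S-Condition as a statement about zeros of a countable family of real-analytic functions of $\mu$, and then apply the identity theorem to conclude that each such function has a discrete zero set.

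For each $m\in\Z^2$ (the index $i$ is irrelevant, since the ODE and initial data in \eqref{eq415} depend on $(i,m)$ only through $|m|^2$), the three initial conditions at $x^0=0$ uniquely determine a solution $\mathcal{W}_m(x^0;\mu)$ of the third-order linear ODE in \eqref{eq415} on $[0,L]$, because $\tilde e_1(x^0)<0$ throughout. Existence of a classical solution to \eqref{eq415} is therefore equivalent to the scalar compatibility condition
\[
\Phi_m(\mu):=\mathcal{W}_m'(L;\mu)=0.
\]
I would then verify that $\Phi_m$ is real-analytic in $\mu$: the background quantities $t(x^0;\mu)=M_b^2(x^0;\mu)$ and $\rho_b(x^0;\mu)$ solve the analytic ODEs \eqref{eqmach} and \eqref{eq26} with analytic initial data, so they are jointly real-analytic in $(x^0,\mu)$ on the admissible range $\{(x^0,\mu): L<L_{M_0}(\mu)\}$. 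Consequently all coefficients $\tilde e_1,\tilde e_2,\tilde e_3,e_4$, the function $b(\mu,\cdot)$, and the initial data in \eqref{eq415} are real-analytic in $\mu$, and standard analytic dependence of ODE solutions on parameters makes $\Phi_m$ real-analytic.

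To show $\Phi_m\not\equiv 0$, I would evaluate its $\mu\to 0^+$ limit. In that limit the background degenerates to a uniform state, so $t\equiv M_b(0)^2$, $\rho_b\equiv \rho_b(0)$, $b\equiv 1/\rho_b(0)$, $\gamma_0=0$, and $e_2=e_3=e_4=0$. The ODE in \eqref{eq415} reduces to $(1-M_b(0)^2)\mathcal{W}'''=|m|^2\mathcal{W}'$ with $\mathcal{W}(0)=0$, $\mathcal{W}'(0)=1/\rho_b(0)$, $\mathcal{W}''(0)=0$. Setting $V=\mathcal{W}'$ and $k=\sqrt{|m|^2/(1-M_b(0)^2)}\ge 0$ gives $V(x^0)=\cosh(kx^0)/\rho_b(0)$, so
\[
\Phi_m(0^+)=\frac{\cosh(kL)}{\rho_b(0)}>0
\]
for every $m$ (including $m=0$, where the limit is $1/\rho_b(0)$). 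By continuity of $\Phi_m$, there exists $\mu_m>0$ with $\Phi_m(\mu)\ne 0$ for all $\mu\in(0,\mu_m)$; in particular $\Phi_m$ is not identically zero on the admissible $\mu$-interval.

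The identity theorem for real-analytic functions of one variable then implies that $\mathcal{S}_m:=\{\mu>0:\Phi_m(\mu)=0\}$ is discrete in the open interval of admissible $\mu$, hence at most countable. Setting $\mathcal{S}=\bigcup_{m\in\Z^2}\mathcal{S}_m$ yields a countable union of at-most-countable sets, so $\mathcal{S}$ itself is at most countable, and by Definition \ref{def41} every $\mu\in(0,+\infty)\setminus\mathcal{S}$ gives a background solution satisfying the S-Condition. The main obstacle I anticipate is the analyticity step: tracking carefully how $t,\rho_b$, and thereby each derived coefficient, depend on the parameter $\mu$, and confirming that the degenerate limit $\mu\to 0^+$ is attained continuously (keeping $\tilde e_1$ bounded away from zero so the third-order equation remains non-degenerate). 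Once this technical regularity is secured, the zero-counting via the identity theorem is essentially immediate.
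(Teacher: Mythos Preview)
Your proposal is correct and follows essentially the same route as the paper: both define the scalar function $\mathcal{W}_m'(L;\mu)$, invoke analytic dependence of ODE solutions on the parameter $\mu$, and verify non-vanishing at $\mu=0$ to conclude via the identity theorem that the zero set is at most countable. The only cosmetic difference is that at $\mu=0$ the paper appeals to the maximum principle and Hopf lemma for the reduced equation $e_1 w''+|m|^2 w=0$, whereas you exploit that the coefficients become constant and solve explicitly to get $\cosh(kL)/\rho_b(0)>0$; both arguments yield the same conclusion.
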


\begin{proof}
We note that the background solution $U_b$, and all the coefficients $e_1, e_2, e_3, e_4$ and $b$  depend analytically on the parameter $\mu$. Hence the unique solution $\mathcal{W}_{i,m}$ to this Cauchy problem \eqref{eq415} is also real analytic with respect to the parameter $\mu$ (cf. \cite{walter}). We write it as $\mathcal{W}_{i,m}=\mathcal{W}_{i,m}(x^0; \mu)$. Particulary, $\vartheta_{i,m}(\mu)\triangleq \mathcal{W}_{i,m}'(L; \mu)=b(\mu,L)w_{i,m}(L)$ is real analytic for $\mu\geq 0$.

For given $i=1,2,3,4, m\in\Z^2,\mu_0>0$, suppose now there are infinite numbers of  $\mu\in[0,\mu_0]$ so that $\vartheta_{i,m}(\mu)=0$. Then by compactness of $[0, \mu_0]$, the function $\vartheta_{i,m}$ has a non-isolated zero point. So it must be identically zero and we have $\vartheta_{i,m}(0)=0.$ However, for $\mu=0$, problem \eqref{eq415} is reduced to
\begin{eqnarray}\label{eqmu0}
\begin{cases}
e_1(x^0)w_{i,m}''+|m|^2w_{i,m}=0,\\
w_{i,m}(0)=1,\qquad w_{i,m}'(0)=0.
\end{cases}
\end{eqnarray}
%\begin{eqnarray}\label{eqmu0}
%\begin{cases}
%\frac{e_1(x^0)}{{b}(0,x^0)}\mathcal{W}_{i,m}'''
%-\frac{2e_1(x^0){b}'(0,x^0)}{{b}^2(0,x^0)}\mathcal{W}''_{i,m}\\
%\qquad+\left(\frac{|m|^2}{{b}(0,x^0)}-\frac{e_1(x^0){b}''(0,x^0)}{{b}^2(0,x^0)}
%+\frac{2e_1(x^0)({b}'(0,x^0))^2}{{b}^3(0,x^0)}
%\right)\mathcal{W}'_{i,m}=0,\\
%\mathcal{W}_{i,m}(0)=0,\quad \mathcal{W}_{i,m}'(0)=b(0,0),
%\quad \mathcal{W}_{i,m}''(0)=b'(0,0)-\gamma_0b(0,0).
%\end{cases}
%\end{eqnarray}

We note that $e_1(x^0)<0$. Hence maximal principle and Hopf boundary point lemma (cf. \cite{GT}) imply that $w_{i,m}(L;0)\neq 0$, namely $\vartheta_{i,m}(0)\neq 0$, contradicts to our conclusion that $\vartheta_{i,m}(0)=0$. So for each fixed $m\in\Z^2,\mu_0>0$, there are at most finite numbers of zeros of $\vartheta_{i,m}$. Therefore, there are at most countable infinite numbers of $\mu$ so that the problem \eqref{eq415} may have a solution. The conclusion of the lemma then follows.
\end{proof}

\subsection{A priori estimate}
By considering the nonlocal term
\[
e_4(x^0)\int_{0}^{x^0}{b}(\mu,\tau) \hat{p}(\tau, {x}')\,\dd \tau
\]
as part of the non-homogenous term,  and applying the standard theory of second order elliptic equations in \cite{GT} for the Dirichlet and oblique derivative problem, with the aid of a standard higher regularity argument, and interpolation inequalities in \cite{GT},  we infer that  any $\hat{p}\in C^{k,\alpha}(\bar{\Omega})$ ($k=2,3$) solves problem \eqref{eq49} should satisfy the estimate
\begin{eqnarray}\label{eq417}
\norm{\hat{p}}_{C^{k,\alpha}(\bar{\Omega})}\le
C\Big(\norm{\hat{p}}_{C^0(\bar{\Omega})}+\norm{h}_{C^{k-2,\alpha}(\bar{\Omega})}
+\norm{g_0}_{C^{k-1,\alpha}(\T^2)}+\norm{g_1}_{C^{k,\alpha}(\T^2)}\Big),
\end{eqnarray}
with  $C$ a constant depending only on the background solution $U_b$ and $\alpha$.

Then by \eqref{eq417}
and a compactness argument as in \cite[p.738]{LXY2016}, we have the {\it a priori} estimate:
\begin{eqnarray}\label{eq418}
\norm{\hat{p}}_{C^{k,\alpha}(\bar{\Omega})}\le
C\Big(\norm{h}_{C^{k-2,\alpha}(\bar{\Omega})}
+\norm{g_0}_{C^{k-1,\alpha}(\T^2)}+\norm{g_1}_{C^{k,\alpha}(\T^2)}\Big)
\end{eqnarray}
for any $C^{k,\alpha}$ solution of problem \eqref{eq49}, provided that the only solution to problem \eqref{eq410} is zero.

\subsection{Approximate solutions}
We now use Fourier series to establish a family of approximate solutions to problem \eqref{eq49}.

Without loss of generality, we take $g_1=0$  in the sequel. % since  this accounts we replace $\hat{p}$ by $\hat{p}-g_1$, and $h$ by $h-\mathfrak{L}g_1$, $g_0$ by $g_0-\p_0g_1-\gamma_0g_1$ in problem \eqref{eq49}.
We also set $\{h^{(n)}\}_n$ to be a sequence of $C^\infty(\bar{\Omega})$ functions that converges to $h$ in $C^{k-2,\alpha}(\bar{\Omega})$, and $\{g^{(n)}_0\}_n\subset C^\infty(\T^2)$ converges to $g_0$ in $C^{k-1,\alpha}(\T^2)$. Now for fixed $n$, we consider problem \eqref{eq49}, with $h$ there replaced by $h^{(n)}$, and $g_0$ replaced by $g^{(n)}_0$.

Suppose that
\begin{eqnarray}
h^{(n)}(x)&=&\sum_{m_1,m_2=0}^{\infty}\lambda_{m}\left\{ h^{(n)}_{1,m}(x^0)\cos(m_1x^1)\cos(m_2x^2)+h^{(n)}_{2,m}(x^0)\sin(m_1x^1)
\cos(m_2x^2)\right.\nonumber\\
&&\left.+h^{(n)}_{3,m}(x^0)\cos(m_1x^1)\sin(m_2x^2)+h^{(n)}_{4,m}(x^0)\sin(m_1x^1)
\sin(m_2x^2)\right\},\label{eq420}\\
g^{(n)}_0(x')&=&\sum_{m_1,m_2=0}^{\infty}\lambda_{m}\left\{ (g_0^{(n)})_{1,m}\cos(m_1x^1)\cos(m_2x^2)+(g_0^{(n)})_{2,m}\sin(m_1x^1)
\cos(m_2x^2)\right.\nonumber\\
&&\left.+(g_0^{(n)})_{3,m}\cos(m_1x^1)\sin(m_2x^2)+(g_0^{(n)})_{4,m}\sin(m_1x^1)
\sin(m_2x^2)\right\}.\label{eq421}
\end{eqnarray}
Then for $\hat{p}$ given by \eqref{eq411}, each $\mathcal{P}_{i,m}(x^0), i=1,2,3,4$  should solve the following two-point boundary value problem of a third order ordinary differential equation:
\begin{eqnarray}\label{eq422}
\begin{cases}
\tilde{e}_1(x^0)\mathcal{P}_{i,m}'''
+\tilde{e}_2(x^0)\mathcal{P}''_{i,m}+\tilde{e}_3(x^0)\mathcal{P}'_{i,m}+e_4(x^0)\mathcal{P}_{i,m}={h}^{(n)}_{i,m}(x^0)\hspace{5em} x^0\in[0, L],\\
\mathcal{P}_{i,m}(0)=0,
\quad \mathcal{P}_{i,m}''(0)
+\left(\gamma_0-\frac{b'(\mu,0)}{b(\mu,0)}\right)\mathcal{P}_{i,m}'(0)=b(\mu,0)(g_0^{(n)})_{i,m},\\
\mathcal{P}_{i,m}'(L)=0.
%L_{i,m}(p_{i,m})\triangleq e_{1}(x^0)p_{i,m}''(x^0)+e_{2}(x^0)p_{i,m}'(x^0)+(e_3(x^0)
%+|m|^2)p_{i,m}(x^0)\\
%\hspace{6em}+e_{4}(x^0)
%\int_{0}^{x^0}{b}(\mu,\tau) p_{i,m}(\tau)\,\dd \tau
%p_{i,m}'(0)+\gamma_0p_{i,m}(0)=
%p_{i,m}(L)=0.
\end{cases}
\end{eqnarray}

Actually, we know that, under the S-Condition, the homogeneous system has only the trivial solution. So by standard theory of linear ordinary differential equations, there is one and only one solution to problem \eqref{eq422}. Note that ${h}^{(n)}_{i,m}(x^0)\in C^{\infty}([0, L])$ as  $h^{(n)}\in C^{\infty}(\bar{\Omega})$, and the coefficients in \eqref{eq422} are all real analytic, so the solution $p_{i,m}(x^0)=\frac{1}{b(\mu,x^0)}\mathcal{P}_{i,m}'(x^0)$ belongs to  $C^{\infty}([0, L]).$

Now for $N\in\mathbb{N}$, we define
\begin{eqnarray*}
\hat{p}_N(x)&=&\sum_{m_1,m_2=0}^{N}\lambda_{m}\left\{ p_{1,m}(x^0)\cos(m_1x^1)\cos(m_2x^2)+p_{2,m}(x^0)\sin(m_1x^1)\cos(m_2x^2)\right.\\
&&\left.+p_{3,m}(x^0)\cos(m_1x^1)\sin(m_2x^2)+p_{4,m}(x^0)\sin(m_1x^1)\sin(m_2x^2)\right\},\\
h^{(n)}_N(x)&=&\sum_{m_1,m_2=0}^{N}\lambda_{m}\left\{ h^{(n)}_{1,m}(x^0)\cos(m_1x^1)\cos(m_2x^2)+h^{(n)}_{2,m}(x^0)\sin(m_1x^1)
\cos(m_2x^2)\right.\\
&&\left.+h^{(n)}_{3,m}(x^0)\cos(m_1x^1)\sin(m_2x^2)+h^{(n)}_{4,m}(x^0)\sin(m_1x^1)
\sin(m_2x^2)\right\},\\
(g^{(n)}_0)_N(x')&=&\sum_{m_1,m_2=0}^{N}\lambda_{m}\left\{ (g_0^{(n)})_{1,m}\cos(m_1x^1)\cos(m_2x^2)+(g_0^{(n)})_{2,m}\sin(m_1x^1)
\cos(m_2x^2)\right.\\
&&\left.+(g_0^{(n)})_{3,m}\cos(m_1x^1)\sin(m_2x^2)+(g_0^{(n)})_{4,m}\sin(m_1x^1)
\sin(m_2x^2)\right\}.
\end{eqnarray*}
Apparently  $\hat{p}_{N}, \ h^{(n)}_N\in C^{\infty}(\bar{\Omega})$, and $(g_0^{(n)})_{N}\in C^\infty(\T^2)$. It is also easy to check that $\hat{p}_{N}$ solves the following problem:
\begin{eqnarray}\label{eq424}
\begin{cases}
\mathfrak{L}(\hat{p}_N)=h^{(n)}_N& \text{in}\quad \Omega,\\
\p_0\hat{p}_N+\gamma_0\hat{p}_N=(g_0^{(n)})_N &\text{on}\quad \Sigma_0,\\
\hat{p}_N=0 &\text{on}\quad \Sigma_1.
\end{cases}
\end{eqnarray}

\subsection{Existence}
By the estimate \eqref{eq418}, for any $N_1,N_2\in\mathbb{N}$ with $N_1<N_2$, there holds
\begin{eqnarray*}
\norm{\hat{p}_{N_2}-\hat{p}_{N_1}}_{C^{k,\alpha}(\bar{\Omega})}&\le& C\Big(\norm{h^{(n)}_{N_2}-h^{(n)}_{N_1}}_{C^{k-2,\alpha}(\bar{\Omega})}
+\norm{(g_0^{(n)})_{N_2}-(g_0^{(n)})_{N_1}}_{C^{k-1,\alpha}(\T^2)}\Big).
\end{eqnarray*}
Recall that $h^{(n)}_N\to h^{(n)}$ in $C^{k-2,\alpha}(\bar{\Omega})$ and $(g_0^{(n)})_N\to g_0^{(n)}$ in $C^{k-1,\alpha}(\T^2)$, we infer that  $\{\hat{p}_N\}$  is a Cauchy sequence in $C^{k,\alpha}(\bar{\Omega})$. So there is a $\hat{p}^{(n)}\in C^{k,\alpha}(\bar{\Omega})$  and $\hat{p}_N\to \hat{p}^{(n)}$ in $C^{k,\alpha}(\bar{\Omega})$ as $N\to \infty$. Taking the limit $N\to\infty$ in problem  \eqref{eq424}, one sees that $\hat{p}^{(n)}$ is a $C^{k,\alpha}(\bar{\Omega})$ solution to problem \eqref{eq49}, with $h$ there replaced by $h^{(n)}$, $g_0$ replaced by $g_0^{(n)}$.

Now for the approximate solutions $\{\hat{p}^{(n)}\}_n$, we use the estimate \eqref{eq418} to infer that
\begin{eqnarray*}
\norm{\hat{p}^{(n)}}_{C^{k,\alpha}(\bar{\Omega})}&\le& C\Big(\norm{h^{(n)}}_{C^{k-2,\alpha}(\bar{\Omega})}
+\norm{g_0^{(n)}}_{C^{k-1,\alpha}(\T^2)}\Big)\nonumber\\
&\le&C\Big(\norm{h}_{C^{k-2,\alpha}(\bar{\Omega})}
+\norm{g_0}_{C^{k-1,\alpha}(\T^2)}\Big).
\end{eqnarray*}
Hence by Ascoli--Arzela Lemma, there is a subsequence of $\{\hat{p}^{(n)}\}$ that converges to some $\hat{p}\in C^{k,\alpha}(\bar{\Omega})$ in the norm of $C^{k}(\bar{\Omega})$. Taking limit with respect to this subsequence in the boundary value problems of $\hat{p}^{(n)}$, we easily see that $\hat{p}$ is a classical solution to problem \eqref{eq49}. Therefore, we proved the following proposition.

\begin{proposition}\label{prop41}
Suppose that the S-Condition holds. Then problem \eqref{eq49} has one and only one solution in $C^{k,\alpha}(\bar{\Omega})$, and it satisfies the estimate \eqref{eq418}.
\end{proposition}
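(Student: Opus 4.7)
The plan is to follow the four-step strategy indicated by the subsections of Section \ref{sec4}: uniqueness via Fourier decomposition, a priori Schauder estimate, construction of approximate solutions by truncation, and passage to the limit.

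First I would establish uniqueness. Given any strong $H^2$ solution $\hat{p}$ of the homogeneous problem \eqref{eq410}, Sobolev embedding and bootstrapping (treating the nonlocal Volterra term $e_4(x^0)\int_0^{x^0}b(\mu,\tau)\hat{p}(\tau,x')\,\dd\tau$ as a source and applying the standard Schauder theory for second-order elliptic equations with Dirichlet and oblique-derivative conditions from \cite{GT}) show that $\hat{p}\in C^\infty(\bar\Omega)$. I would then expand $\hat{p}$ in the Fourier series \eqref{eq411} on $\T^2$. Each Fourier coefficient $p_{i,m}(x^0)$ satisfies the integro-differential system \eqref{eq412}--\eqref{eq413}. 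Introducing $\mathcal{P}_{i,m}(x^0)=\int_0^{x^0}b(\mu,\tau)p_{i,m}(\tau)\,\dd\tau$ and differentiating, the problem reduces to a third-order ODE. The case $p_{i,m}(0)=0$ gives a Cauchy problem with zero initial data for the ODE, which by uniqueness forces $\mathcal{P}_{i,m}\equiv 0$ hence $p_{i,m}\equiv 0$. If $p_{i,m}(0)\ne 0$, normalization yields the boundary value problem \eqref{eq415}; the S-Condition rules out any non-trivial solution, so again $p_{i,m}\equiv 0$. This gives $\hat{p}\equiv 0$.

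Next, for the a priori estimate \eqref{eq418}, I would regard the nonlocal term as part of the inhomogeneity, so that $\hat{p}$ satisfies a standard mixed boundary value problem for a uniformly elliptic operator (note $e_1(x^0)<0$ and $\gamma_0<0$). Schauder estimates combined with interpolation yield \eqref{eq417}. Upgrading this to \eqref{eq418} (removing the $\|\hat{p}\|_{C^0}$ term) proceeds by a standard compactness/contradiction argument: if \eqref{eq418} failed, one would extract a sequence $\{\hat{p}_k\}$ with unit $C^{k,\alpha}$ norm whose data tend to zero, and using \eqref{eq417} plus Arzel\`a--Ascoli obtain a non-trivial limit solving the homogeneous problem, contradicting the uniqueness already established.

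For existence, I would approximate $h$ in $C^{k-2,\alpha}(\bar\Omega)$ and $g_0$ in $C^{k-1,\alpha}(\T^2)$ by sequences $h^{(n)},g_0^{(n)}$ of smooth functions (and reduce to $g_1=0$ by subtracting a harmonic-type lift if needed). For fixed $n$, I would construct $\hat{p}_N$ via the truncated Fourier series \eqref{eq411} whose coefficients $\mathcal{P}_{i,m}$ solve the two-point BVP \eqref{eq422}. The S-Condition guarantees that the corresponding homogeneous third-order ODE has only the trivial solution, so by standard linear ODE theory \eqref{eq422} admits a unique smooth solution, giving $p_{i,m}=b(\mu,\cdot)^{-1}\mathcal{P}'_{i,m}$. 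Applying \eqref{eq418} to differences $\hat{p}_{N_2}-\hat{p}_{N_1}$ shows $\{\hat{p}_N\}$ is Cauchy in $C^{k,\alpha}(\bar\Omega)$ and its limit $\hat{p}^{(n)}$ solves \eqref{eq49} with data $h^{(n)},g_0^{(n)}$. Finally, \eqref{eq418} applied to $\hat{p}^{(n)}$ gives a uniform bound, so by Arzel\`a--Ascoli a subsequence converges in $C^k(\bar\Omega)$ to the desired $\hat{p}\in C^{k,\alpha}(\bar\Omega)$ solving \eqref{eq49}.

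The main obstacle is the verification that the Schauder theory applies cleanly despite the Volterra-type nonlocal term: one must check that treating the integral term as a source does not destroy the estimate, which is why the compactness argument for \eqref{eq418} is essential — the nonlocal term is not captured by any maximum principle, so uniqueness must be imported through the S-Condition rather than derived from sign considerations on the zeroth-order coefficients.
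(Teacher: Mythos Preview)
Your proposal is correct and follows essentially the same four-step strategy as the paper: uniqueness via Fourier decomposition on $\T^2$ combined with the S-Condition, the Schauder estimate \eqref{eq417} upgraded to \eqref{eq418} by a compactness/contradiction argument, construction of approximate solutions $\hat{p}_N$ by solving the two-point third-order ODE \eqref{eq422} mode-by-mode, and passage to the limit first in $N$ (Cauchy sequence) then in $n$ (Arzel\`a--Ascoli). Even the treatment of the nonlocal Volterra term as a source for both regularity bootstrapping and the Schauder estimate matches the paper exactly.
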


\section{Stability of subsonic Fanno flows}\label{sec5}

For $k=2,3$, and $U=(p,s,E, u'=(u^1,u^2))$, suppose that $p\in C^{k,\alpha}(\overline{\Omega})$ and $s,E, u'\in C^{k-1,\alpha}(\overline{\Omega})$. We define the  norm
\begin{eqnarray}\label{eq425}
\norm{U}_k&\triangleq& \norm{p}_{C^{k,\alpha}(\overline{\Omega})}+
\norm{s}_{C^{k-1,\alpha}(\overline{\Omega})}+
\norm{E}_{C^{k-1,\alpha}(\overline{\Omega})}+
\sum_{\beta=1}^2\norm{u^\beta}_{C^{k-1,\alpha}(\overline{\Omega})}.
\end{eqnarray}
Let $K$ be a positive number to be chosen. We define $X_{K\varepsilon}$ to be the (non-empty) set of functions $U$ so that \eqref{eq21} and \eqref{eq22} hold, and
$\norm{U-U_b}_3\le K\varepsilon.$
To prove Theorem \ref{thm21}, we construct a mapping $\mathcal{T}$ on $X_{K\varepsilon}$ for suitably chosen $K$ and $\varepsilon$, and show that it contracts under the norm $\norm{\cdot}_2$. Then by a Banach fixed point theorem, $\mathcal{T}$ has uniquely one fixed point $U\in X_{K\varepsilon}$, which is exactly a solution to Problem (S3).

\subsection{Construction of mapping $\mathcal{T}$}
For any $U\in X_{K\varepsilon}$, by the following process we define a mapping $\mathcal{T}: U\mapsto \tilde{U}$.  Set $\hat{U}=\tilde{U}-U_b$. Then we only need to determine $\hat{U}$. We also use $C$ to denote generic positive constants which might be different in different places.
\medskip

\subsubsection{Determination of $\widetilde{A(s)}$}

Noting the equation \eqref{eq335}, we solve the unknowns $\widehat{A(s)}$ from the following Cauchy problem of linear transport equation, where the velocity field $u$ is given as part of $U\in X_{K\varepsilon}$:
\begin{eqnarray}
\begin{cases}\label{eq426}
D_u\widehat{A(s)}=0&\text{in}\quad \Omega,\\
\widehat{A(s)}=A(s_0(x'))-A(s_b(0))&\text{on}\quad \Sigma_0.
\end{cases}
\end{eqnarray}
Once we solved $\widehat{A(s)}$, we get $ \widetilde{A(s)}=A(s_b)+\widehat{A(s)}.$
Since $u\in C^{2,\alpha}(\overline{\Omega})$, and recall that $s_0(x')-s_b(0)\in C^{3,\alpha}(\T^2)$, by Lemma A.1 in \cite[p.751]{LXY2016},  we have

\begin{lemma}\label{lem43}
	There is uniquely one $\widehat{A(s)}\in C^{2,\alpha}(\overline{\Omega})$ solves \eqref{eq426}. In addition,
	\begin{eqnarray}\label{eq427}
\norm{\widehat{A(s)}}_{C^{2,\alpha}
		(\overline{\Omega})}&\le& C\norm{A(s_0)-A(s_b)}_{C^{2,\alpha}
		({\T^2})}\le C\varepsilon,
	\end{eqnarray}
The second inequality holds provided that \eqref{eq213} is valid.
\end{lemma}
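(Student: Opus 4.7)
The plan is to solve \eqref{eq426} by the method of characteristics, since $\widehat{A(s)}$ satisfies a homogeneous linear first order transport equation with coefficients determined by the given vector field $u$. First, I would use the smallness assumption $U\in X_{K\varepsilon}$ together with the fact that $u_b^0>0$ is bounded below on $[0,L]$ to conclude that, for $\varepsilon$ small enough, $u^0\geq \delta >0$ on $\overline{\Omega}$. This lets me divide by $u^0$ and rewrite \eqref{eq426} as
\begin{equation*}
\partial_0 \widehat{A(s)} + \frac{u^1}{u^0}\,\partial_1\widehat{A(s)} + \frac{u^2}{u^0}\,\partial_2\widehat{A(s)} = 0,
\end{equation*}
which is exactly the transport equation whose characteristics $x'=\varphi(x^0,\bar x)$ were introduced just before Lemma \ref{lem41}.

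Next, since $\widehat{A(s)}$ is constant along each characteristic, existence and uniqueness are immediate: the unique solution is given by the pull-back of the boundary data,
\begin{equation*}
\widehat{A(s)}(x^0,x') = \bigl(A(s_0)-A(s_b(0))\bigr)\!\bigl(\varphi_{x^0}^{-1}(x')\bigr).
\end{equation*}
Uniqueness in $C^{2,\alpha}(\overline{\Omega})$ follows because any $C^1$ solution must be constant along the characteristics, which sweep out all of $\overline{\Omega}$ from $\Sigma_0$ in view of $u^0>\delta$.

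For the regularity and the quantitative estimate, I would invoke the standard fact from ODE theory that the flow of a $C^{2,\alpha}$ vector field depends in a $C^{2,\alpha}$ manner on its initial data, so $\varphi_{x^0}$ and hence $\varphi_{x^0}^{-1}$ is a $C^{2,\alpha}$ diffeomorphism of $\T^2$ whose $C^{2,\alpha}$ norm is controlled by $\|u\|_{C^{2,\alpha}(\overline\Omega)}$ and $\delta$. Composing the $C^{3,\alpha}(\T^2)$ boundary datum $A(s_0)-A(s_b(0))$ with this diffeomorphism yields a $C^{2,\alpha}(\overline{\Omega})$ function whose norm is bounded by $C\,\|A(s_0)-A(s_b(0))\|_{C^{2,\alpha}(\T^2)}$, and then \eqref{eq213} together with the smoothness of $s\mapsto A(s)$ gives the final $C\varepsilon$ bound in \eqref{eq427}.

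The main (and essentially only) technical point is tracking that the composition estimate uses only the $C^{2,\alpha}$ norm of $u$ (which is of size $K\varepsilon$ plus a background contribution, hence uniformly bounded) and the lower bound on $u^0$; once those two ingredients are available, the lemma is a direct application of Lemma A.1 in \cite{LXY2016}, so I would simply cite that reference rather than redevelop the composition/flow estimates.
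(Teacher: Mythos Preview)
Your proposal is correct and matches the paper's approach: the paper simply notes that $u\in C^{2,\alpha}(\overline{\Omega})$ and $s_0(x')-s_b(0)\in C^{3,\alpha}(\T^2)$, then cites Lemma~A.1 in \cite[p.751]{LXY2016}, which is precisely the reference you land on after your (more explicit) characteristics argument.
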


\subsubsection{ Determination of $\tilde{p}$}
Now consider the following mixed boundary value problem of $\hat{p}$ (comparing to problem \eqref{eq48}):
\begin{eqnarray}\label{eq428}
\begin{cases}
e_1(x^0)\p_0^2\hat{p}-\p_1^2\hat{p}-\p_2^2\hat{p}+e_2(x^0)\p_0\hat{p}+e_3(x^0)\hat{p}\\
\hspace{4em}+e_4(x^0)\int_{0}^{x^0}{b}(\mu,\tau) \hat{p}(\tau, {x}')\,\dd \tau
=F_0+F&\text{in}\quad \Omega,\\
\p_0\hat{p}+\gamma_0\hat{p}=G &\text{on}\quad \Sigma_0,\\
\hat{p}=p_1(x')-p_b(L) &\text{on}\quad \Sigma_1.
\end{cases}
\end{eqnarray}
Note here that for the term $\widehat{A(s)}$ in nonhomogeneous  term $F_0$ (see \eqref{eqF0}), we take $\widehat{A(s)}$ to be the functions solved from Lemma \ref{lem43}. Then we obtain that $\norm{F_0(U)}_{C^{1,\alpha}(\overline{\Omega})}\le C\varepsilon.$

We now specify the nonhomogeneous  term $F(U)$. In $F_6$ and $F_4$ (see \eqref{eqF6} and \eqref{eq329}), all $\hat{U}$ should be replaced by $U-U_b$ (recall that the $U\in X_{K\varepsilon}$ has been fixed), by direct computations we get that
$\norm{F_6}_{C^{1,\alpha}(\overline{\Omega})}+\norm{F_4}_{C^{1,\alpha}(\overline{\Omega})}\le CK^2\varepsilon^2.$
In the expression of $F_3$ (see \eqref{eq315}), we take all $U$ to be the given one.  So by the smallness of tangential velocity $u'$, we have
$\norm{\gamma p(F_1+F_2)}_{C^{1,\alpha}(\overline{\Omega})}\le CK^2\varepsilon^2,$ and  $\norm{F_3}_{C^{1,\alpha}(\overline{\Omega})}\le CK^2\varepsilon^2.$
Therefore we obtain that
\begin{eqnarray}\label{eq429}
\norm{F(U)}_{C^{1,\alpha}(\overline{\Omega})}\le C(\varepsilon+K^2\varepsilon^2).
\end{eqnarray}

Next we specify the term $G(U)$ in boundary conditions.
For $G_1$ (see \eqref{eq318}), the tangential velocity $u'$ should be  the given boundary data $u'_{0}(x')$ (hence belong to $C^{3,\alpha}(\T^2; \R^2)$), and the others are replaced by the given $U$. So by \eqref{eq214}, we have
$\norm{G_1}_{C^{2,\alpha}(\T^2)}\le C\varepsilon.$
For the term $G_2$ (see \eqref{eq319}), $u'$ and $A(s), E$ should be the given boundary data $u'_{0}(x'), A(s_0)(x'), E_0(x')$, respectively, while the others are replaced by the given $U\in X_{K\varepsilon}$. Hence it still lies in $C^{2,\alpha}(\T^2)$ and we have
$\norm{G_2}_{C^{2,\alpha}(\T^2)}\le CK\varepsilon^2.$
For the expression of $G_3$ (see \eqref{eq333}), except the
underline terms are replaced by the given boundary data, all the other $U$ can be taken as the given $U$ in $X_{K\varepsilon}$, and it easily follows that
$\norm{G_3}_{C^{2,\alpha}(\T^2)}\le C(\varepsilon+K^2\varepsilon^2).$
So in all, we obtain
\begin{eqnarray}\label{eq430}
\norm{G(U)}_{C^{2,\alpha}(\T^2)}\le C(\varepsilon+K^2\varepsilon^2).
\end{eqnarray}

Then, since we assume that the S-Condition holds, by Proposition \ref{prop41}, we have the following lemma.
\begin{lemma}\label{lem44}
There is uniquely one solution $\hat{p}\in C^{3,\alpha}(\overline{\Omega})$ to problem  \eqref{eq428}. Moreover, there holds that
	\begin{eqnarray}\label{eq431}
	\norm{\hat{p}}_{C^{3,\alpha}(\overline{\Omega})}&\le& C\left(\norm{F_0(U)}_{C^{1,\alpha}(\overline{\Omega})}
+\norm{F(U)}_{C^{1,\alpha}(\overline{\Omega})}
+\norm{G(U)}_{C^{2,\alpha}(\T^2)}+\norm{p_1-p_b}_{C^{3,\alpha}(\T^2)}\right)
\nonumber\\	
	&\le& C(\varepsilon+K^2\varepsilon^2).
	\end{eqnarray}
\end{lemma}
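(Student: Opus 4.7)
The plan is to recognize that Lemma \ref{lem44} is essentially a corollary of Proposition \ref{prop41} once the nonhomogeneous data have been identified and estimated. Problem \eqref{eq428} is exactly an instance of the abstract mixed boundary value problem \eqref{eq49} studied in Section \ref{sec4}, with
$$h(x) = F_0(U) + F(U),\qquad g_0(x') = G(U),\qquad g_1(x') = p_1(x') - p_b(L).$$
Since the background solution $U_b$ is assumed to satisfy the S-Condition, Proposition \ref{prop41} applied with $k=3$ yields a unique $\hat p \in C^{3,\alpha}(\overline{\Omega})$ satisfying the a priori estimate \eqref{eq418}.

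Next I would verify that each of these data pieces has the regularity required by Proposition \ref{prop41}. The term $F_0$, defined in \eqref{eqF0}, is built from the background coefficients $e_5, e_6$, the boundary datum $\hat E_0$, and $\widehat{A(s)}$; using Lemma \ref{lem43} to control $\widehat{A(s)}$ in $C^{2,\alpha}(\overline\Omega)$ and the smallness hypothesis \eqref{eq213} on $\hat E_0$, one gets $F_0(U)\in C^{1,\alpha}(\overline\Omega)$ with $\norm{F_0(U)}_{C^{1,\alpha}(\overline\Omega)} \le C\varepsilon$. The estimates \eqref{eq429} and \eqref{eq430}, which were established immediately before the lemma statement by inspection of the explicit formulas for $F_3, F_4, F_6$ and $G_1, G_2, G_3$, give $\norm{F(U)}_{C^{1,\alpha}(\overline\Omega)} \le C(\varepsilon+K^2\varepsilon^2)$ and $\norm{G(U)}_{C^{2,\alpha}(\T^2)} \le C(\varepsilon+K^2\varepsilon^2)$. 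Finally, the perturbation assumption \eqref{eq213} gives $\norm{p_1 - p_b(L)}_{C^{3,\alpha}(\T^2)} \le \varepsilon$.

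Substituting these four bounds into the a priori estimate \eqref{eq418} and absorbing constants yields the conclusion \eqref{eq431}. There is no real analytical obstacle at the level of the lemma: the core work, namely construction of approximate solutions via Fourier expansion, derivation of the closed-form a priori estimate, and verification of uniqueness modulo the S-Condition, has already been packaged inside Proposition \ref{prop41}, while the quadratic gain $K^2\varepsilon^2$ (as opposed to merely $\varepsilon$) has been encoded in the estimates \eqref{eq429}--\eqref{eq430} by carefully separating genuinely linear contributions from products of perturbations $\hat U$ with their derivatives. The only thing one must be slightly careful about is that $\widehat{A(s)}$ appearing inside $F_0$ is the \emph{just-constructed} solution of \eqref{eq426} rather than an arbitrary function, so that its regularity matches the requirements of Proposition \ref{prop41}; this is why Lemma \ref{lem43} is used and invoked before rather than after Lemma \ref{lem44}.
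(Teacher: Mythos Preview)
Your proposal is correct and follows essentially the same approach as the paper: the paper's own proof is just the single sentence ``since we assume that the S-Condition holds, by Proposition~\ref{prop41}, we have the following lemma,'' together with the already-established bounds $\norm{F_0(U)}_{C^{1,\alpha}}\le C\varepsilon$, \eqref{eq429}, \eqref{eq430}, and \eqref{eq213}. Your identification of the data $h=F_0+F$, $g_0=G$, $g_1=p_1-p_b(L)$ and your remark that $\widehat{A(s)}$ must be the function produced by Lemma~\ref{lem43} are exactly what the paper does.
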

Hence we obtain that $\tilde{p}=\hat{p}+p_b$.

\subsubsection{ Determination of $\tilde{E}$}

By the equations \eqref{eq334}, we solve the unknowns $\hat{E}$ from the following Cauchy problems of linear transport equations, where the velocity field $u$ is given as part of $U\in X_{K\varepsilon}$:
\begin{eqnarray}
\begin{cases}\label{eq432}
D'_u \hat{E}+2\mu \hat{E}=\frac{2\mu }{\gamma-1}\rho_{b}^{\gamma-1}\widehat{A(s)}+\frac{2\mu}{\rho_b} \hat{p}+H&\text{in}\quad \Omega,\\
\hat{E}=E_0(x')-E_b(0)&\text{on}\quad \Sigma_0.
\end{cases}
\end{eqnarray}

Once we solved $\hat{E}$, we get $\tilde{E}=E_b+\hat{E}.$ Note here that for the two terms $\widehat{A(s)}$ and $\hat{p}$ on the right-hand side, we take $\widehat{A(s)}$  and $\hat{p}$ to be the functions solved from Lemma \ref{lem43} and Lemma \ref{lem44}, respectively. For the nonhomogeneous term $H(U)$ (see \eqref{eqH}), all $\hat{U}$ should be replaced by $U-U_b$ (recall that the $U\in X_{K\varepsilon}$ has been fixed). So by direct computations,  we get that
$\norm{H}_{C^{2,\alpha}(\overline{\Omega})}\le CK^2\varepsilon^2.$ Since $u\in C^{2,\alpha}(\overline{\Omega})$, and recall that ${E_0(x')-E_b(0)}\in C^{3,\alpha}(\T^2)$,   we have

\begin{lemma}\label{lem45}
	There is a unique $\hat{E}\in C^{2,\alpha}(\overline{\Omega})$ that solves \eqref{eq432}. In addition,
	\begin{eqnarray}\label{eq433}
	\norm{\hat{E}}_{C^{2,\alpha}(\overline{\Omega})}&\le& C\left(\norm{E_0-E_b}_{C^{2,\alpha}(\T^2)}+\norm{\widehat{A(s)}}_{C^{2,\alpha}
		(\overline{\Omega})}+\norm{\hat{p}}_{C^{2,\alpha}
		(\overline{\Omega})}+\norm{H}_{C^{2,\alpha}
		(\overline{\Omega})}\right)\nonumber\\
&\le& C(\varepsilon+K^2\varepsilon^2).
	\end{eqnarray}
Here the estimates \eqref{eq213}, \eqref{eq427} and \eqref{eq431} are used to obtain the second inequality.
\end{lemma}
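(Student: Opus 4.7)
The plan is to solve the Cauchy problem \eqref{eq432} by the method of characteristics along the integral curves $x'=\varphi(x^0, \bar{x})$ of the vector field $u'/u^0$ introduced before Lemma \ref{lem41}. Since $U\in X_{K\varepsilon}$ with $K\varepsilon$ sufficiently small, $u\in C^{2,\alpha}(\overline{\Omega})$ and $u^0$ is bounded below by a positive constant (close to $u_b^0$), so $\varphi$ is a $C^{2,\alpha}$ flow on $[0,L]\times \T^2$, and each $\varphi_{x^0}\colon \T^2 \to \T^2$ is a $C^{2,\alpha}$ diffeomorphism whose norms are controlled by $\norm{u}_{C^{2,\alpha}(\overline{\Omega})}$, hence ultimately by $U_b$ alone.

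Along a characteristic starting at $(0,\bar{x})$, equation \eqref{eq432} reduces to the linear first-order ODE
\[
\frac{\dd}{\dd x^0}\bigl[\hat{E}(x^0, \varphi(x^0,\bar{x}))\bigr] + 2\mu\, \hat{E}(x^0, \varphi(x^0,\bar{x})) = \mathcal{R}(x^0, \varphi(x^0,\bar{x})),
\]
where $\mathcal{R} = \tfrac{2\mu}{\gamma-1}\rho_b^{\gamma-1}\widehat{A(s)} + \tfrac{2\mu}{\rho_b}\hat{p} + H$ denotes the source. Using the integrating factor $e^{2\mu x^0}$ together with the boundary value on $\Sigma_0$, one obtains the representation formula
\[
\hat{E}(x^0, \varphi(x^0, \bar{x})) = e^{-2\mu x^0}\bigl(E_0(\bar{x})-E_b(0)\bigr) + \int_0^{x^0} e^{-2\mu(x^0-\tau)}\, \mathcal{R}(\tau, \varphi(\tau, \bar{x})) \, \dd \tau,
\]
which, after inversion through the diffeomorphism $\varphi_{x^0}$, yields both existence and uniqueness of $\hat{E}\in C^{2,\alpha}(\overline{\Omega})$. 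This is exactly the setting of Lemma A.1 of \cite{LXY2016} invoked to establish Lemma \ref{lem43}, augmented only by the zeroth-order damping term $2\mu \hat{E}$; that term merely contributes the harmless exponential weight $e^{-2\mu(x^0-\tau)}$ and does not affect applicability.

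For the $C^{2,\alpha}$ estimate, the plan is to differentiate the representation formula up to second order, apply the chain rule, and use the standard Hölder product and composition estimates together with the bounds on $\varphi$. Since the weight $e^{-2\mu(x^0-\tau)}$ and the background quantities $\rho_b$, $E_b$ are smooth and uniformly bounded on $[0,L]$, this gives
\[
\norm{\hat{E}}_{C^{2,\alpha}(\overline{\Omega})} \le C\bigl(\norm{E_0-E_b(0)}_{C^{2,\alpha}(\T^2)} + \norm{\widehat{A(s)}}_{C^{2,\alpha}(\overline{\Omega})} + \norm{\hat{p}}_{C^{2,\alpha}(\overline{\Omega})} + \norm{H}_{C^{2,\alpha}(\overline{\Omega})}\bigr),
\]
with $C$ depending only on $U_b$, $L$, $\alpha$, $\mu$, $\gamma$. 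The announced bound $C(\varepsilon + K^2\varepsilon^2)$ then follows by combining \eqref{eq213}, the estimates \eqref{eq427} and \eqref{eq431}, and the direct quadratic bound $\norm{H}_{C^{2,\alpha}(\overline{\Omega})} \le CK^2\varepsilon^2$, which is immediate from \eqref{eqH} since every term of $H$ is quadratic in the perturbation components of $U$. The only point requiring care is ensuring that all constants are uniform over $U \in X_{K\varepsilon}$; this is guaranteed because $\varphi$ depends continuously on $u$ in $C^{2,\alpha}$ and $u$ remains a small perturbation of $u_b = (u_b^0, 0, 0)$, so this is essentially a bookkeeping matter rather than a substantive obstacle.
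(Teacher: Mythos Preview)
Your proposal is correct and follows essentially the same approach as the paper: the representation formula you write is precisely \eqref{eq43}, and the paper likewise treats Lemma~\ref{lem45} as a direct consequence of the transport-equation estimate (Lemma~A.1 in \cite{LXY2016}) applied to \eqref{eq432}, with the damping term $2\mu\hat{E}$ absorbed into the exponential weight. The paper does not spell out the proof beyond citing \eqref{eq213}, \eqref{eq427}, \eqref{eq431} and the quadratic bound on $H$, so your write-up is in fact more detailed than what appears there.
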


\subsubsection{Determination of $\tilde{u}'$}
Now we solve $\tilde{u}^\beta$ ($\beta=1,2$) from the transport equations \eqref{eq337}. For $\beta=1$, we have the problem
\begin{eqnarray}\label{eq434}
\begin{cases}
u^j\p_j\tilde{u}^1+\frac{1}{\rho}\p_1\tilde{p}=0 &\text{in}\quad\Omega,\\
\tilde{u}^1=u^1_0(x') &\text{on}\quad \Sigma_0.
\end{cases}
\end{eqnarray}
Here $u^1_0(x')$ is the given boundary data, and $\tilde{p}$ is given by Lemma \ref{lem44}. Then we have the following lemma.
\begin{lemma}\label{lem46}
	There is uniquely one solution $\tilde{u}^1\in C^{2,\alpha}(\overline{\Omega})$ to problem \eqref{eq434}. In addition,
	\begin{eqnarray}\label{eq435}
	\norm{\tilde{u}^1}_{C^{2,\alpha}(\overline{\Omega})}\le C\Big( \norm{u^1_0}_{C^{2,\alpha}(\T^2)}+\norm{\hat{p}}_{C^{3,\alpha}(\overline{\Omega})}\Big)
\le C\Big(\varepsilon+K^2\varepsilon^2\Big).
	\end{eqnarray}
The second inequality holds provided that \eqref{eq213} is valid.
\end{lemma}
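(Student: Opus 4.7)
My plan is to view \eqref{eq434} as a linear Cauchy problem for a first-order transport equation along the vector field $u/u^0$, and to invoke essentially the same machinery (Lemma A.1 of \cite{LXY2016}) already used to establish Lemmas \ref{lem43} and \ref{lem45}. First I would divide \eqref{eq434} through by $u^0$, which is bounded away from zero on $\overline{\Omega}$ since $U\in X_{K\varepsilon}$ is close to the background flow with $u_b^0>0$ (for $\varepsilon$ small enough), to rewrite the problem in the standard form
\[
\partial_0 \tilde{u}^1 + \frac{u^\beta}{u^0}\,\partial_\beta \tilde{u}^1 \;=\; -\frac{1}{u^0\rho}\,\partial_1 \tilde{p},\qquad \tilde{u}^1\big|_{\Sigma_0}=u^1_0(x').
\]
The characteristic curves are precisely the curves $x'=\varphi(x^0,\bar{x})$ introduced before Lemma \ref{lem41}, whose regularity is recorded there. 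Integration along characteristics yields the explicit representation
\[
\tilde{u}^1(x^0,\varphi_{x^0}(\bar x)) \;=\; u^1_0(\bar x)-\int_0^{x^0}\!\!\left(\frac{1}{u^0\rho}\,\partial_1 \tilde p\right)\!(\tau,\varphi_\tau(\bar x))\,\dd \tau,
\]
which already gives existence and uniqueness of a classical solution; the $C^{2,\alpha}$ regularity then follows exactly as in Lemma A.1 of \cite{LXY2016}, since $u\in C^{2,\alpha}(\overline{\Omega})$, the datum $u^1_0\in C^{3,\alpha}(\T^2)\hookrightarrow C^{2,\alpha}(\T^2)$, and the inhomogeneity $-(u^0\rho)^{-1}\partial_1\tilde p$ lies in $C^{2,\alpha}(\overline{\Omega})$.

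For the quantitative estimate, the crucial observation is that $p_b$ depends only on $x^0$, so $\partial_1\tilde p=\partial_1\hat p$ and hence
\[
\left\|\frac{\partial_1\tilde p}{u^0\rho}\right\|_{C^{2,\alpha}(\overline{\Omega})}\le C\,\|\hat p\|_{C^{3,\alpha}(\overline{\Omega})},
\]
with $C$ depending only on $U_b$ and on the a priori bound $K\varepsilon$ controlling $U-U_b$. The transport estimate (Lemma A.1 of \cite{LXY2016}) then delivers
\[
\|\tilde u^1\|_{C^{2,\alpha}(\overline{\Omega})}\le C\!\left(\|u^1_0\|_{C^{2,\alpha}(\T^2)}+\|\hat p\|_{C^{3,\alpha}(\overline{\Omega})}\right),
\]
and combining this with the smallness assumption \eqref{eq213} on $u^1_0$ and with the bound \eqref{eq431} on $\|\hat p\|_{C^{3,\alpha}}$ produces the desired estimate of order $\varepsilon+K^2\varepsilon^2$.

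There is no substantive obstacle here, and the lemma is really a direct transcription of the argument used for \eqref{eq432}. The only two points worth a short verification are (i) the uniform positivity $u^0\ge\delta>0$ needed to normalise the equation and to make sense of the characteristic flow $\varphi_{x^0}$, which follows from $\|U-U_b\|_3\le K\varepsilon$ by choosing $\varepsilon$ small (depending on $U_b$); and (ii) the cancellation $\partial_1 p_b\equiv 0$, which is essential because it keeps the forcing $(u^0\rho)^{-1}\partial_1\tilde p$ of size $O(\varepsilon+K^2\varepsilon^2)$ rather than $O(1)$. The statement for $\tilde u^2$ is, of course, obtained by an entirely parallel argument with $\partial_1$ replaced by $\partial_2$.
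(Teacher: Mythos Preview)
Your proposal is correct and follows essentially the same approach as the paper, which does not give an explicit proof but relies on the transport-equation machinery (Lemma A.1 of \cite{LXY2016}) already invoked for Lemmas \ref{lem43} and \ref{lem45}. Your explicit identification of the two key points---the uniform positivity of $u^0$ and the cancellation $\partial_1 p_b\equiv 0$---is exactly what underlies the paper's unstated argument.
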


We have similar results for $\tilde{u}^2$.

\subsubsection{Conclusion}
So for any fixed $U\in X_{K\varepsilon}$ we obtain uniquely one  $\tilde{U}=(\tilde{p}, \tilde{s}, \tilde{E}, \tilde{u}')$, and the estimates \eqref{eq427} \eqref{eq431} \eqref{eq433} and \eqref{eq435} yield that
\[
\norm{\tilde{U}-U_b}_3\le C(\varepsilon+K^2\varepsilon^2).
\]
By choosing $K=\max\{2C, 1\}$, and $\varepsilon_0\le \min\{1/K^2,1\}$, we  have
\begin{eqnarray}\label{equ}
\norm{\tilde{U}-U_b}_3\le K\varepsilon
\end{eqnarray}
for all  $\varepsilon\le \varepsilon_0.$
Hence we proved that $\tilde{U}\in X_{K\varepsilon}$, and the mapping $\mathcal{T}: X_{K\varepsilon}\to X_{K\varepsilon}$ is well-defined.

\subsection{Contraction of the mapping $\mathcal{T}$}
For $U^{(1)}, U^{(2)}\in X_{K\varepsilon}$, set $\tilde{U}^{(i)}=\mathcal{T}(U^{(i)})$, $i=1,2$. We will  show below that the mapping
\[
\mathcal{T}: X_{K\varepsilon}\to X_{K\varepsilon}, \quad U\mapsto \tilde{U}
\]
is a contraction in the sense that
\begin{eqnarray}\label{eq437}
\norm{\tilde{U}^{(1)}-\tilde{U}^{(2)}}_2\le\frac12\norm{U^{(1)}-U^{(2)}}_2,
\end{eqnarray}
provided that $\varepsilon_0$ is further small. To prove \eqref{eq437}, we consider the problems satisfied by $\bar{U}=\tilde{U}^{(1)}-\tilde{U}^{(2)}$.

\noindent {\it Step 1.} We note that $\overline{A(s)}$ solves the following problem ({\it cf.} \eqref{eq426})
\begin{eqnarray*}
\begin{cases}
D_{u^{(1)}}\overline{A(s)}+D_{u^{(1)}-u^{(2)}}\widehat{A(s)}^{(2)}=0&\text{in}\quad \Omega,\\
\overline{A(s)}=0&\text{on}\quad \Sigma_0;
\end{cases}
\end{eqnarray*}

By  Lemma \ref{lem43}, we have
\begin{eqnarray}\label{eq438}
\norm{\overline{A(s)}}_{C^{1,\alpha}(\bar{\Omega})}\le C \norm{u^{(1)}-u^{(2)}}_{C^{1,\alpha}(\bar{\Omega})}
\norm{\widehat{A(s)}^{(2)}}_{C^{2,\alpha}(\bar{\Omega})}
\le C\varepsilon \norm{U^{(1)}-U^{(2)}}_2.
\end{eqnarray}

\noindent {\it Step 2.} Next we seek an estimate of $\bar{p}$, which solves  ({\it cf.} \eqref{eq428})
\begin{eqnarray*}
\begin{cases}
\mathfrak{L}(\bar{p})=\bar{F}_0+F(U^{(1)})-F(U^{(2)}) & \text{in}\quad \Omega,\\
\p_0\bar{p}+\gamma_0\bar{p}=G(U^{(1)})-G(U^{(2)}) &\text{on}\quad \Sigma_0,\\
\bar{p}=0 &\text{on}\quad \Sigma_1,
\end{cases}
\end{eqnarray*}
with
\[
\bar{F}_0=e_5(x^0)\overline{A(s)}+e_6(x^0)\int_{0}^{x^0}\frac{2\mu}{\gamma-1}
e^{2\mu\tau}\rho_{b}^{\gamma-1}\overline{A(s)}(\tau, \varphi_\tau(\bar{x}))\,\dd \tau.
\]

By Lemma \ref{lem44} and  \eqref{eq438},  direct computation shows
\begin{eqnarray}\label{eq439}
\norm{\bar{p}}_{C^{2,\alpha}(\overline{\Omega})}&\le& C\left(\norm{\overline{A(s)}}_{C^{0,\alpha}(\overline{\Omega})}
+\norm{F(U^{(1)})-F(U^{(2)})}_{C^{0,\alpha}(\overline{\Omega})}
+\norm{G(U^{(1)})-G(U^{(2)})}_{C^{1,\alpha}(\T^2)}\right)
\nonumber\\	
	&\le& C\varepsilon \norm{U^{(1)}-U^{(2)}}_2.
\end{eqnarray}

\noindent{\it Step 3.} Note that $\bar{E}$ solves the following problem ({\it cf.} \eqref{eq432})
\begin{eqnarray*}
\begin{cases}
D'_{u^{(1)}} \bar{E}+2\mu \bar{E}+D_{u^{(1)}-u^{(2)}}\hat{E}^{(2)}=\frac{2\mu }{\gamma-1}\rho_{b}^{\gamma-1}\overline{A(s)}+\frac{2\mu}{\rho_b} \bar{p}+H(U^{(1)})-H(U^{(2)})&\text{in}\quad \Omega,\\
\bar{E}=0&\text{on}\quad \Sigma_0.
\end{cases}
\end{eqnarray*}
Then using Lemma \ref{lem45}, \eqref{eq438} and \eqref{eq439}, one has
\begin{eqnarray}\label{eq440}
\norm{\bar{E}}_{C^{1,\alpha}(\bar{\Omega})}&\le&C\left(
\norm{\overline{A(s)}}_{C^{1,\alpha}(\bar{\Omega})}
+\norm{\bar{p}}_{C^{1,\alpha}(\bar{\Omega})}
+\norm{u^{(1)}-u^{(2)}}_{C^{1,\alpha}(\bar{\Omega})}
\norm{\hat{E}^{(2)}}_{C^{2,\alpha}(\bar{\Omega})}\right.\nonumber\\
&&\qquad\left. +\norm{H(U^{(1)})-H(U^{(2)})}_{C^{1,\alpha}(\bar{\Omega})}\right)\nonumber\\
&\le&C\varepsilon \norm{U^{(1)}-U^{(2)}}_2.
\end{eqnarray}

\noindent {\it Step 4.} From \eqref{eq434}, $\bar{u}^1$ solves
\begin{eqnarray*}
\begin{cases}
(u^j)^{(1)}\p_j\bar{u}^1+\Big((u^j)^{(1)}
-(u^j)^{(2)}\Big)\p_j(\tilde{u}^1)^{(2)}
=-\frac{1}{\rho^{(1)}}\p_1 \bar{p}+\left(\frac{1}{\rho^{(2)}}-\frac{1}{\rho^{(1)}}\right)\p_1 \tilde{p}^{(2)} &\text{in}\quad \Omega,\\
\bar{u}^1=0&\text{on}\quad \Sigma_0.
\end{cases}
\end{eqnarray*}
So by Lemma \ref{lem46}, and  \eqref{eq439}, there holds
\begin{eqnarray}\label{441}
\norm{\bar{u}^1}_{C^{1,\alpha}(\bar{\Omega})}&\le& C\left(\norm{\bar{p}}_{C^{2,\alpha}(\bar{\Omega})}
+\norm{u^{(1)}-u^{(2)}}_{C^{1,\alpha}(\bar{\Omega})}
\norm{(\tilde{u}^1)^{(2)}}_{C^{2,\alpha}(\bar{\Omega})}\right.\nonumber\\
&&\qquad\left.+\norm{\rho^{(1)}-\rho^{(2)}}_{C^{1,\alpha}(\bar{\Omega})}
\norm{\hat{p}^{(2)}}_{C^{2,\alpha}(\bar{\Omega})}\right)\nonumber\\
&\le&C\varepsilon \norm{U^{(1)}-U^{(2)}}_2.
\end{eqnarray}
There is a similar estimate  for the second component $\bar{u}^2$.

\medskip \noindent {\it Conclusion.} Now summing up the inequalities
\eqref{eq438}--\eqref{441}, we get
\begin{eqnarray*}
\norm{\tilde{U}^{(1)}-\tilde{U}^{(2)}}_2\le C\varepsilon\norm{U^{(1)}-U^{(2)}}_2,
\end{eqnarray*}
which implies \eqref{eq437} if $\varepsilon\in(0, \varepsilon_0)$ and  $C\varepsilon_0<1/2$.

Therefore, by Banach fixed point theorem, $\mathcal{T}$ has one and only one fixed point,  say $U$, in $X_{K\varepsilon}$. By the construction of the mapping $\mathcal{T}$, the fixed point is a solution to Problem (S3). On the contrary, for a solution to Problem (S3) which lies in $X_{K\varepsilon}$, it must be a fixed point of $\mathcal{T}$.  The proof of  Theorem \ref{thm21} is completed.

\bigskip
{\bf Acknowledgments.}
%The authors would like to thank Prof. Y.-G. Wang and B. Fang for their valuable comments.
Hairong Yuan is supported in part by National Nature Science Foundation of China under
Grant No.11371141, and by Science and Technology Commission of Shanghai Municipality (STCSM) under grant No. 13dz2260400.

%%%----------------------------------------------------------------------------

\end{document}